\documentclass[a4paper,12pt]{amsart}
\usepackage[latin1]{inputenc}
\usepackage{amsmath,amsthm,amsfonts,amscd,amssymb,eucal,latexsym,mathrsfs}
\usepackage[all]{xy}
\usepackage{latexsym, amssymb, amscd, mathrsfs, graphics, fullpage,graphicx}

\overfullrule=0pt

\setlength{\textwidth}{14.1cm}
\setlength{\oddsidemargin}{4mm}
\setlength{\evensidemargin}{4mm}

\theoremstyle{definition}
\newtheorem{theorem}{Theorem}
\newtheorem{corollary}[theorem]{Corollary}
\newtheorem{lemma}[theorem]{Lemma}
\newtheorem{proposition}[theorem]{Proposition}
\newtheorem{remark}[theorem]{Remark}

\newtheorem{definition}[theorem]{Definition}

 \DeclareMathOperator{\Aut}{Aut} 
  \DeclareMathOperator{\Inn}{Inn}

\DeclareMathOperator{\supp}{ supp}

\newcommand{\reg}{r}

\newcommand{\HI}{{\mathrm{\bf H}}}\newcommand{\CI}{{\mathrm{\bf C}}}\newcommand{\RI}{{\mathrm{\bf R}}}\newcommand{\QI}{{\mathrm{\bf Q}}}
\newcommand{\ZI}{{\mathrm{\bf Z}}}\newcommand{\NI}{{\mathrm{\bf N}}}

\newcommand{\G}{\Gamma}\renewcommand{\S}{S}

\newcommand{\SL}{\mathrm {SL}}\newcommand{\PSL}{\mathrm {PSL}}

\newcommand{\inj}{\hookrightarrow}

\newcommand{\sq}{${7\over 4}$}

\date{February 2, 2009}

\title{The 4-string braid group $B_4$ has property RD\\ and exponential mesoscopic  rank}

\author{Sylvain Barr\'e}
\address{\hskip-\parindent
Sylvain Barr\'e, Universit\'e de Bretagne Sud, BP 573, Centre Yves Coppens, Campus de Tohannic, 56017 Vannes, France}
\email{sylvain.barre@univ.ubs.fr}
\author{Mika\"el Pichot}
\address{\hskip-\parindent
Mika\"el Pichot, Department of Mathematical Sciences,
University of Tokyo,
3-8-1 Komaba, Tokyo, 153-8914
JAPAN}
\email{pichot@ms.u-tokyo.ac.jp}

\begin{document}

\begin{abstract}
We prove that 
the braid group $B_4$ on 4 strings, as well as its central quotient $B_4/\langle z\rangle$, have the property RD of Haagerup--Jolissaint. 
It follows that the automorphism group $\Aut(F_2)$ of the free group $F_2$ on 2 generators has property RD.

We also prove that the braid group $B_4$ is a group of intermediate rank (of dimension 3). Namely, we show that  
 both $B_4$  and its central quotient have exponential mesoscopic rank, i.e., that they contain exponentially many large flat balls which are not included in flats. 
\end{abstract}

\maketitle

\section{Introduction}

Let $n\geq 2$ be an integer. The braid group $B_n$ on $n$ strings is a finitely presented group generated by $n-1$ elementary braids $\sigma_1,\ldots,\sigma_{n-1}$ subject to the following relations: 
\begin{itemize}
\item $\sigma_i\sigma_{i+1}\sigma_i=\sigma_{i+1}\sigma_i\sigma_{i+1}$ for all $1\leq i\leq  n-2$; 
\item $\sigma_i\sigma_j=\sigma_j\sigma_i$ for all $1\leq i,j\leq n-1$ such that  $|i-j|\geq 2$.
\end{itemize}
This is the classical Artin presentation of $B_n$ (see e.g. Chapter 10 in \cite{BZ}).  

The group $B_3$ is  closely related to the modular group $\PSL_2(\ZI)$. The above presentation shows that the braid $z=(\sigma_1\sigma_2)^3$ is central in $B_3$  and that $B_3/\langle z\rangle$ is generated by the class $u$ of $\sigma_1\sigma_2\sigma_1$ and $v$ of $\sigma_1\sigma_2$, where $u^2=v^3=z$. Thus $B_3/\langle z\rangle=\langle u,v\mid u^2=v^3=1\rangle=\PSL_2(\ZI)$. In fact the group $B_3$ admits a proper isometric action with compact quotient on a metric product $T_3\times \RI$, where $T_3$ is a trivalent tree,  which is the Bass-Serre tree of $\PSL_2(\ZI)$.

We are interested  here in the 4-string braid groups $B_4$. It was proved by Brady in \cite{Brady} that $B_4$ admits a free isometric action with compact quotient on a CAT(0) simplicial complex $Y$ of dimension 3. The 3-dimensional cells of $Y$ are Euclidean tetrahedra whose faces are right-angle triangles and the quotient space $Y/B_4$ contains 16 tetrahedra, identified together along a single vertex.
It is still true that $Y$ splits as a product $Y=X\times \RI$, where  $X$ is now of dimension 2. The complex $X$ can be obtained from a non positively curved complex of groups whose fundamental group is the quotient of $B_4$ by its center (see \cite{crisp}). 

The existence of a CAT(0) structure on $B_n$ is  an open problem for $n\geq 6$. Recall that on $B_4$, the 3-dimensional CAT(0) structure which are minimal (e.g., those whose links are isomorphic to that of $Y$) can be classified, by geometric rigidity results  due to  Crisp and Paoluzzi \cite{crisp}. 
On the other hand, Charney \cite{Charney-b4} proved that the Deligne complex \cite{Deligne} of $B_4$ is also  a CAT(0) space of dimension 3, with respect to the Moussong metric (we remind that the Deligne action of $B_4$ on this complex is not proper). 

\subsection{Property RD} Let now $G$ be an arbitrary countable group. A length on $G$ is a map $|\cdot| : G\to \RI_+$ such that $|e|=0$, $|s|=|s^{-1}|$ and $|st|\leq |s|+|t|$ for $s,t\in G$ and $e$ the identity element. We recall that  $G$ is said to have \emph{property RD} (\cite{Jol-def}) with respect to a length $|\cdot|$ if there is a polynomial $P$ such that for any $r\in \RI_+$
and $f,g\in \CI G$  with $\supp(f)\subset B_r$ one has 
\[
\|f*g\|_2\leq P(r)\|f\|_2\|g\|_2
\]  
where $B_r=\{x\in G,~|x|\leq r\}$ is the ball of radius $r$ in $G$, $\supp(f)$ is the set of $x\in G$ with $f(x)\neq 0$,  and $\CI G$ is the complex group algebra of $G$. For an introduction to property RD we refer to Chapter 8 in \cite{Val-bc}. The above convolution inequality, usually referred to as the Haagerup inequality (after  Haagerup \cite{Haa}), allows to control the  operator norm of $f$ acting by convolution on $\ell^2(G)$ in terms of its $\ell^2$ norm. Hence, some important consequences of property RD are of a spectral nature. 

When $G$ is finitely generated we have the word length $|\cdot|_S$  associated to any finite generating set $S$. Then property RD with respect to $|\cdot |_S$ is independent of $S$  so we simply speak  of property RD for $G$ is that case.

Our first main result is the following theorem.

\begin{theorem}\label{th1}
The braid group $B_4$ on 4 strings, as well as its central quotient $B_4/\langle z\rangle$, have property RD.
\end{theorem}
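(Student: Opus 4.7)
The plan is to establish property RD first for the central quotient $\bar G := B_4/\langle z\rangle$ and then to transfer it to $B_4$ via the central extension
\[
1 \to \langle z \rangle \to B_4 \to \bar G \to 1.
\]
For the transfer step the key observation is that the CAT(0) complex $Y = X \times \RI$ on which $B_4$ acts splits as a metric product, with $z$ acting by translation on the $\RI$-factor and $\bar G$ acting cocompactly on $X$. Consequently the word length on $B_4$ (with respect to generators lifting the Artin generators together with $z$) is bi-Lipschitz equivalent to the sum of the word length on $\bar G$ and the standard length on $\ZI = \langle z\rangle$; in particular $\langle z\rangle$ is undistorted in $B_4$. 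Property RD is stable under central extensions by undistorted copies of $\ZI$ --- the Haagerup inequality for $B_4$ is recovered by decomposing functions according to their $z$-coordinate, applying the inequality for $\bar G$ slice by slice, and summing over $\ZI$ --- so it suffices to establish property RD for $\bar G$.

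The substantive part is therefore property RD for $\bar G$ acting properly and cocompactly on the 2-dimensional CAT(0) simplicial complex $X$. I would use Jolissaint's reformulation of property RD as a polynomial bound on the number of combinatorial geodesic triangles of prescribed side lengths in a Cayley graph of $\bar G$, equivalently a polynomial bound on the number of geodesic bigons of bounded thickness between two vertices at a given distance in $X$. The strategy has three ingredients: (i) an analysis of the vertex links of $X$ using Brady's explicit description together with the link classification of Crisp--Paoluzzi; (ii) a local-to-global control on the branching of combinatorial geodesics in $X$, deduced from the link data; (iii) a surgery / retraction argument on geodesic paths that propagates the local bound into a global polynomial count.

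The main obstacle is that $X$ sits in an \emph{intermediate-rank} regime: it contains honest Euclidean flats (so $X$ is not $\delta$-hyperbolic, which rules out the Jolissaint--de la Harpe method for hyperbolic groups), yet it is neither a Euclidean building, a CAT(0) cube complex, nor a space with isolated flats (so the Ramagge--Robertson--Steger, Chatterji--Ruane and isolated-flat approaches do not apply off the shelf). The counting of geodesic bigons must therefore be performed by hand from the specific combinatorial data of $X$. The delicate point is that along each flat plane of $X$ there can a priori be two-parameter families of combinatorial geodesics; showing that only polynomially many such alternatives actually appear in $\bar G$, via the constraints forced by the vertex links, is where I expect the bulk of the work to lie.
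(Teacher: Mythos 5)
Your overall architecture --- prove property RD for the central quotient $\bar G=B_4/\langle z\rangle$ first, then transfer it to $B_4$ through the central extension --- is exactly the paper's. The transfer step is essentially sound, though you justify it differently: the paper bounds the central cocycle $c(g,h)=\kappa(g)\kappa(h)\kappa(gh)^{-1}$ explicitly via the Thurston--Garside normal form (the half-twist $\Delta$ is a generator, $z=\Delta^2$, and the normal form is geodesic, so $|c(g,h)|\leq n$ on the ball of radius $n$), and then invokes Jolissaint's Proposition 2.1.9 / Chatterji--Pittet--Saloff-Coste Proposition 7.2. Your route via the splitting $Y=X\times\RI$ gives undistortion of $\langle z\rangle$, and combined with a shortest-lift section this does yield a linearly bounded cocycle, so the same stability result applies; but be aware that the naked statement ``RD is stable under central extensions by undistorted $\ZI$'' is not a quotable theorem --- you must exhibit the Lipschitz section and the cocycle bound, which is precisely what your ``slice by slice'' summation secretly uses.

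The genuine gap is the main step: property RD for $\bar G$. You correctly identify that $X$ is neither hyperbolic nor a building nor a space with isolated flats, and that one must control geodesic multiplicities in the presence of genuine flats --- and then you stop, saying this is ``where I expect the bulk of the work to lie.'' That is the entire content of the theorem for the quotient, and your three-ingredient plan (link analysis, local branching control, surgery on geodesics) is a statement of intent, not an argument; in particular you give no mechanism for ruling out super-polynomial families of combinatorial geodesics inside and near flats, which is exactly the difficulty that defeated naive approaches for $\tilde A_2$ buildings and required the Ramagge--Robertson--Steger / Lafforgue retraction machinery. The paper does not perform any such bespoke count for $X$: it invokes its general Theorem 5 from the earlier work on intermediate rank (stated here as Theorem \ref{th3}), namely that \emph{any} group acting properly on a $2$-dimensional boundaryless CAT(0) complex with equilateral triangle faces has RD, applies it to the free cocompact action of the torsion-free finite-index subgroup $H=P_4/\ZI$ on the Brady complex $X$, and passes to $\bar G$ by finite-index stability of RD. Without either citing such a general result or actually executing the counting argument, your proof of the quotient case is incomplete.
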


This gives a partial answer to a question in \cite{questions}, Section 8.  The fact that $B_3$ has property RD was shown very early on  by Jolissaint in \cite{Jol-def}, and the other cases remained open since then;  in \cite{questions}, the question of property RD is raised more generally for all braid groups $B_n$. 

\emph{Update.} The problem of showing property RD for $B_n$ has been solved recently by Behrstock and Minsky (see \cite{bermin}).
More generally, they established property RD for all mapping class groups  (Recall that the braid group $B_n$ can be identified to the mapping class group of the $n$-punctured disk.)

\medskip

The proof of Theorem \ref{th1} is divided into two steps. 
The first step relies on our previous results from \cite{rd}:

\begin{theorem}[{\cite[Theorem 5]{rd}}]\label{th3}
Let $G$ be a group acting properly on  a CAT(0) simplicial complex $\Delta$ of dimension 2 without boundary and whose faces are equilateral triangles of the Euclidean plane. Then $G$ has property RD with respect to the length induced from the 1-skeleton of $\Delta$. 
\end{theorem}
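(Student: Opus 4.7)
The plan is to reduce property RD to a polynomial counting estimate for combinatorial bigons in $\Delta^{(1)}$ and to establish that estimate from the planar, equilateral-triangle structure of $\Delta$. Using Jolissaint's criterion, it suffices to exhibit a polynomial $P$ such that, for every $r,s,t\in\NI$ and every $z\in G$ of length $t$, the number of factorizations $z=xy$ with $|x|=r$ and $|y|=s$ is at most $P(r+s+t)$. Fixing a basepoint $x_0\in \Delta^{(0)}$ and transporting the question to $\Delta$, this becomes a count of intermediate vertices $x$ lying at 1-skeleton distances $r$ and $s$ from two prescribed vertices at distance $t$.

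The geometric input is the rigidity of 2-dimensional CAT(0) geodesics. The CAT(0) hypothesis forces the link of every vertex to be a metric graph with edges of length $\pi/3$ (from the equilateral-triangle assumption) and girth at least $2\pi$. From this I would deduce two facts: (i) the CAT(0) geodesic between any two vertices of $\Delta$ is unique and crosses a sequence of 2-cells whose total number grows linearly in the distance; (ii) every combinatorial 1-skeleton geodesic between the same two vertices stays in a neighborhood of controlled combinatorial width of this CAT(0) geodesic. Item (ii) would follow from a local convexity argument: the girth condition on links forbids short combinatorial detours, and a filling argument in dimension 2 then limits the global drift to a width polynomial in the distance.

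With this in place, the final step is planar combinatorics in the equilateral triangular lattice. Once the two endpoints and the controlled neighborhood are fixed, the 1-skeleton geodesics between them correspond to monotone lattice paths in a narrow tiled strip, and the number of such paths of length $n$ is polynomial in $n$. This gives the polynomial bound on the bigon count, and a standard Haagerup-style dyadic decomposition over spheres then upgrades the bigon estimate to the convolution inequality defining property RD. The main obstacle is step (ii), namely proving that combinatorial geodesics cannot drift arbitrarily far from the CAT(0) geodesic in such a complex; this is where the ``without boundary'' hypothesis enters, as it rules out degenerate flat strata along which combinatorial geodesics could otherwise slide freely and accumulate polynomially unbounded deviation.
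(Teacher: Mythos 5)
A preliminary remark: the paper does not prove this statement at all --- it is imported verbatim as Theorem 5 of \cite{rd} --- so there is no in-paper argument to compare yours against; your proposal must therefore be judged on its own terms, and it fails at the very first step. The reduction you attribute to Jolissaint is not a valid criterion for property RD, and the counting estimate you propose to establish is false for the groups to which the theorem is applied. Take $z=e$ (so $t=0$) and $r=s=n$: the factorizations $e=xy$ with $|x|=|y|=n$ are exactly the pairs $(x,x^{-1})$ with $x$ in the sphere of radius $n$, so their number is the cardinality of that sphere, which grows exponentially for any group of exponential growth --- in particular for $H=P_4/\langle z\rangle$ and $G=B_4/\langle z\rangle$, the cases needed in this paper. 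More generally, whenever $r+s>t$ the set of $x$ with $|x|=r$ and $|x^{-1}z|=s$ can already be exponentially large in $r+s-t$ for a free group acting on its tree, so no polynomial in $r+s+t$ can bound it; and even if such a bound held, the Cauchy--Schwarz argument upgrading it to the Haagerup inequality requires a polynomial depending on a single variable (say $\min(r,s)$), since otherwise the sum over the spheres supporting $g$ cannot be dominated by $\|g\|_2$. The arguments that actually work in this setting (Ramagge--Robertson--Steger \cite{RRS}, Lafforgue \cite{Laf-rd}, and \cite{rd}) do not bound the raw number of factorizations: they first split the convolution according to auxiliary geometric data attached to the geodesic triangle $(e,x,z)$ (a retraction onto an apartment, or a median/center configuration) and apply Cauchy--Schwarz piece by piece, the polynomial coming from the number of pieces together with a rigidity statement asserting that within each piece $x$ is determined up to polynomially many choices.

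Your step (ii) is also untenable as stated. In a flat region tiled by equilateral triangles the combinatorial geodesics between two vertices at distance $t$ sweep out a parallelogram whose width is \emph{linear} in $t$, so there is no narrow strip; and a neighborhood of the CAT(0) geodesic whose width is merely ``polynomial in the distance'' contains, wherever branching occurs, exponentially many vertices, hence yields no counting bound whatsoever. Finally, the hypothesis ``without boundary'' does not function as you suggest: it says that every edge lies in at least two triangles, and its role is to allow local flat configurations (strips, half-planes) to be extended --- precisely the kind of extendability exploited in Lemmas \ref{loz}--\ref{strip} of this paper --- not to confine combinatorial geodesics. If you wish to reconstruct a proof of Theorem \ref{th3}, the starting point should be the scheme of \cite{RRS} and its adaptation to general triangle polyhedra in \cite{rd}, not a global bigon count.
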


We apply Theorem \ref{th3} to the quotient   $B_4/\langle z\rangle$.  By results of \cite{Brady,crisp}, this group  acts on a simplicial complex $X$ with the required properties.

The second step uses automaticity of $B_4$, and more precisely, the Thurston normal forms for braids in $B_4$, which allows  to go back to $B_4$ from its central quotient. 

Details of the proof are  in Section \ref{s2}, after a brief survey on property RD in Section \ref{s1'}. It would be interesting to implement the above approach  of solving first the case of central quotients  for higher braid groups. 

\medskip

As a corollary of Theorem \ref{th1}, we obtain the following result (compare \cite{questions}, Section 8, where the question of property RD is raised  in general for all $\Aut(F_n)$, $n \geq 2$):

\begin{corollary}\label{c3}
The automorphism group $\Aut(F_2)$ of the free group on 2 generators has property RD.
\end{corollary}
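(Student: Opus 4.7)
The plan is to derive Corollary \ref{c3} from Theorem \ref{th1} by appealing to a classical commensurability between $\Aut(F_2)$ and the braid group. The key fact I would invoke, essentially due to Nielsen and conveniently packaged in modern terms by Dyer--Formanek--Grossman, is that $B_4/\langle z\rangle$ embeds as a subgroup of finite index in $\Aut(F_2)$. This is consistent with the coincidence of the virtual cohomological dimensions (both equal to $2$) and with the abelianisations: $(B_4/\langle z\rangle)^{\ab}\cong\ZI/12$, together with an involution inverting each Artin generator $\sigma_i\mapsto\sigma_i^{-1}$, produces a semidirect product whose abelianisation is the Klein-four group $(\ZI/2)^2\cong\Aut(F_2)^{\ab}$, which matches the known abelianisation of $\Aut(F_2)$.

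With such a commensurability in hand, the proof is completed by the standard stability of property RD: if $H\leq G$ is a subgroup of finite index, then $G$ has property RD (with respect to the word-length from any finite generating set) if and only if $H$ does. Applying this with $H=B_4/\langle z\rangle$ and $G=\Aut(F_2)$, and using property RD for $H$ supplied by Theorem \ref{th1}, property RD for $\Aut(F_2)$ follows immediately. As a backup, if the reference one actually uses supplies instead a short exact sequence of the form $1\to N\to B_4\to Q\to 1$ with $Q$ of finite index in $\Aut(F_2)$ and $N$ finite or finite-by-central, one can equally well invoke property RD for $B_4$ itself (also guaranteed by Theorem \ref{th1}) together with the corresponding stability of RD under finite extensions.

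The only substantive step in this plan is locating and verifying the precise classical commensurability statement relating $\Aut(F_2)$ to a central quotient of $B_4$; the RD transfer is formal and relies solely on the word-length on $\Aut(F_2)$ and its restriction to $B_4/\langle z\rangle$ being comparable up to bounded multiplicative constants, which is automatic for a subgroup of finite index. Thus the main obstacle is entirely combinatorial-group-theoretic and lies outside the analytic content already developed for Theorem \ref{th1}.
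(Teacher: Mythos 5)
Your proposal is correct and follows essentially the same route as the paper: the paper realizes $B_4/\langle z\rangle\cong\Inn(B_4)$ as an index-$2$ subgroup of $\Aut(B_4)\cong\Aut(F_2)$ (citing Dyer--Grossman and Krammer) and then transfers property RD across the finite-index inclusion, exactly as you outline. The only difference is that you leave the precise commensurability as a fact to be looked up (supporting it with heuristic checks on vcd and abelianisations), whereas the paper pins it down via the isomorphism $\Aut(F_2)\cong\Aut(B_4)$.
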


Indeed, $\Aut(F_2)$ is isomorphic to $\Aut(B_4)$, itself containing  $\Inn(B_4)$ as a subgroup of index 2 (see \cite{dyer,Kram}). Thus property RD for $\Aut(F_2)$ follows from the corresponding result  for $\Inn(B_4)$, which is isomorphic to the central quotient of $B_4$. Note that in \cite{prw}, a faithful action of $\Aut(F_2)=\Aut(B_4)$ on the complex $X$ is constructed.

\subsection{The braid group $B_4$ as a group of intermediate rank} Groups and simplicial complexes appearing in Theorem \ref{th3} provide us with a large pool of objects satisfying \emph{intermediate rank} properties. See \cite{rd} for definitions and concrete examples. 
We discuss here the intermediate rank properties of $B_4$ and its central quotient (denoted  $G$ below). 

We introduced in  \cite{rd}  a notion of \emph{mesoscopic rank} for a CAT(0) space $X$, which reflects the presence in $X$ of maximal flats portions (where maximal refers to the dimension, hence the rank terminology) which are (much) larger than  ``germs of flats" in $X$ (say,  flats of tangent cones) but \emph{are not actually contained in genuine flats of $X$} (i.e. copies of the Euclidean space $\RI^n$ inside $X$). We recall the precise definitions of mesoscopic rank and exponential mesoscopic rank in Section \ref{s3}.
Following  \cite{rd} we say that a group $G$ is of (exponential) mesoscopic rank when there is a proper action of $G$ with compact quotient on some CAT(0) space  which is of (exponential) mesoscopic rank at some point. 

Our second main result is as follows.

\begin{theorem}\label{meso}
The braid group $B_4$ on 4 strings is of exponential mesoscopic rank.
\end{theorem}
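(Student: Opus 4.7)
The plan is to reduce Theorem \ref{meso} to a two-dimensional combinatorial problem on $X$ and then combine a branching construction of flat balls with a rigidity/counting argument for genuine $2$-flats. Since $Y=X\times\RI$ and $\RI$ is a Euclidean factor, any $3$-dimensional flat $\Pi\subset Y$ splits as $\Pi=F\times\RI$ with $F$ a $2$-flat of $X$ (by the de Rham splitting for flats in CAT$(0)$ products with a Euclidean factor). A maximal-dimensional flat ball of radius $r$ around $(x,t)\in Y$ is therefore the product of a flat disk of radius $r$ around $x$ in $X$ with an interval of length $2r$, and it lies in a $3$-flat if and only if the disk lies in a $2$-flat of $X$. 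Hence exponential mesoscopic rank at $(x,t)$ in $Y$ is equivalent to exponential mesoscopic rank at $x$ in $X$, and since the central quotient $B_4/\langle z\rangle$ acts properly cocompactly on $X$, the question becomes two-dimensional and fits the framework of \cite{rd}.

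Next, I would exploit the equilateral triangular structure of $X$ guaranteed by the hypotheses of Theorem \ref{th3}: any $2$-flat of $X$ is an isometric copy of the Euclidean plane triangulated by equilateral triangles, around each of its interior vertices six incident triangles form a combinatorial hexagon of length $6$ inside the vertex link of $X$, and a $2$-flat disk of radius $r$ around a vertex $x_0$ is encoded by a consistent family of such hexagons, one per interior vertex. Extending the disk outward by one layer amounts to choosing, at each boundary vertex, a hexagon in the link compatible with the triangles already placed. Using Brady's explicit description of the vertex links of $X$ in \cite{Brady} together with the Crisp--Paoluzzi analysis \cite{crisp}, the plan is to locate a vertex $x_0$ and a seed configuration of triangles around $x_0$ such that, at each successive layer of a growing flat disk, there is at least one boundary vertex offering two or more distinct compatible extensions. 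Iterating over $r$ layers produces at least $2^{cr}$ pairwise distinct $2$-flat disks of radius $r$ around $x_0$, for an explicit constant $c>0$.

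The main obstacle is to show that only subexponentially many among these $2^{cr}$ disks extend to a genuine $2$-flat of $X$. Following the method of \cite{rd}, the strategy is to encode each $2$-flat through $x_0$ by bi-infinite periodic combinatorial data along two transverse geodesic directions of the flat, and to exploit the finiteness of the vertex links of $X$ to bound the number of such periodic patterns whose restriction to a ball of radius $r$ around $x_0$ is realized; the resulting bound is polynomial in $r$. Identifying precisely which branches of the extension tree close up into doubly periodic patterns, and ruling out the vast majority of them, is the real technical crux. Subtracting the polynomial number of disks that extend to flats from the $2^{cr}$ disks produced by branching leaves exponentially many flat disks of radius $r$ around $x_0$ not contained in any $2$-flat of $X$, which is exactly exponential mesoscopic rank of $(X,x_0)$. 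By the initial reduction, $(Y,(x_0,0))$ has exponential mesoscopic rank, so $B_4$ is a group of exponential mesoscopic rank, as asserted.
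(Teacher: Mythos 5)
Your reduction from $Y=X\times\RI$ to the two-dimensional complex $X$ is essentially the paper's first step (the paper proves the inequality $\varphi_O^Y\geq\varphi_{\pi(O)}^X$ by placing flat balls inside the cylinders $\pi^{-1}(D_i)\simeq D_i\times\RI$ and noting that a $3$-flat containing such a ball would project to a $2$-flat of $X$ containing $D_i$); only the direction you actually use is needed, and it is sound. The branching construction producing exponentially many flat disks of radius $r$ around a vertex is also in the spirit of what the paper does, via its analysis of lozenges, hexagons and the three flat strips branching along each singular geodesic.

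The genuine gap is in your third step. You propose to show that only polynomially many of the $2^{cr}$ disks extend to $2$-flats of $X$, by encoding flats through $x_0$ by doubly periodic combinatorial data and counting such patterns. There is no reason a $2$-flat of a CAT(0) triangle complex should be doubly periodic, and the count of flats through a point need not be polynomial --- in an affine building of type $\tilde A_2$ (whose link the link of $X$ embeds into, as the paper discusses) every flat ball lies in \emph{uncountably} many flats, so a ``count flats and subtract'' strategy cannot work there and is unjustified here; you yourself flag this step as the unproved crux. The paper avoids counting flats altogether: it arranges the branching so that every disk produced meets a fixed configuration of strips ($S_1,S_1',S_2,S_2'$ attached to a sector $\Pi_0$ of the flat generated by $\langle a,c\rangle$), and then proves a \emph{local non-extendability criterion} (its Lemma \ref{c13}): any flat disk containing that configuration, if enlarged, would force a cycle of length $2\pi$ in some vertex link containing two successive edges of length $2\pi/3$, which the link $L$ forbids. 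Thus each constructed disk is individually obstructed from lying in a flat, with no global counting needed. Without such a local obstruction (or a genuine proof of your polynomial bound), your argument does not close.
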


For the proof, we first establish  that the quotient $G$ of $B_4$ by its center is of exponential mesoscopic rank, and then extend the result to $B_4$.
Note that $B_3$ is  an example of a group acting freely and cocompactly on a simplicial complex as in Theorem \ref{th3} (see \cite{BradyCam}) but it is not of mesoscopic rank, and more precisely for any action with compact quotient on a 2-dimensional CAT(0) space $X$, the space $X$ cannot be of mesoscopic rank.

In course of proving Theorem \ref{meso} we will see that the central quotient $G$ of $B_4$ is,  at the local level,  closely related  to affine Bruhat-Tits buildings of type $\tilde A_2$  (what actually creates some complications in the proof of Theorem \ref{meso}, since the latter are not of mesoscopic rank by \cite{rd}). We will prove however that these connections cannot be extended beyond the local level (and specifically beyond the sphere of radius 1,  see  the last section of the paper). Related to this, we also show that being of exponential mesoscopic rank cannot serve as an obstruction to being embeddable in an affine building, and in particular, in spaces which are not of mesoscopic rank. 

\bigskip

\emph{Acknowledgments.} We thank Jason Behrstock for communicating us his recent preprint \cite{bermin} with Yair Minsky,  as well as for the reference \cite{prw}. The second author thanks JSPS for support.

\section{Property of rapid decay}\label{s1'}

In \cite{Haa}  Haagerup proved that,  for any finitely supported functions $f,g: F_n \to \RI$ defined on the free group $F_n$ on $n$ generators, the convolution product satisfies the inequality
\[
\|f*g\|_2\leq (r+1)\|f\|_2\|g\|_2
\] 
where $r$ is the radius of the support of $f$, with respect to the usual word-length metric of $F_n$. In other words $f$, viewed as a convolution operator from $\ell^2(F_n)$ to itself, is bounded with operator norm at most $(r+1)\|f\|_2$. 

Groups satisfying the above inequality with some polynomial $P(r)$ instead of $r+1$ are said to have property RD  (the precise definition of which we recalled in the introduction), see \cite{Jol-def}, where Jolissaint showed that  (with respect to the word length):
\begin{itemize}
\item a finitely generated amenable group has property RD if and only if it is of polynomial growth;
\item uniform lattices in a rank 1 Lie group have property RD. 
\end{itemize}

The latter has been extended to all hyperbolic groups in the sense of Gromov by de la Harpe \cite{Harpe-rd}, and subsequently to groups which are hyperbolic relatively to polynomial growth subgroups by Chatterji and Ruane \cite{Chat-Ruane},   thereby establishing property RD for all lattices (uniform or not) in rank 1 Lie groups.

The situation is different for groups of rank $\geq 2$.  Non uniform lattices in a higher rank Lie group, typically  $\SL_3(\ZI)$, are  prominent examples of groups without property RD (cf. \cite{Jol-def}). Valette conjectured that all uniform lattices in higher rank Lie groups have property RD. This is known to hold for uniform lattices in $\SL_3(\QI_p)$ (and other groups acting on triangle buildings), by a well-known theorem of Ramagge--Robertson--Steger \cite{RRS} (see also \cite{Laf-rd}) which was the first occurrence of property RD  in higher rank situations. Their results were  extended by Lafforgue \cite{Laf-rd} to cover all uniform lattices in $\SL_3(\RI)$ and $\SL_3(\CI)$. Chatterji \cite{Chat-quat}  showed then that lattices in $\SL_3(\HI)$ and $E_{6(-26)}$  behave similarly. 

We refer the interested reader to \cite{Val-bc,questions} for more information. A well-known application of property RD concerns the Baum--Connes conjecture without coefficient: 
by a theorem of Lafforgue \cite{Laf-bc},  groups which satisfy property RD together with some non positive curvature assumption (called strong bolicity) also satisfies the Baum--Connes without coefficient. For groups with property T, including most hyperbolic groups or cocompact lattices $\SL_3(\RI)$ (for instance), this is the only known approach to the Baum--Connes conjecture. (The Baum--Connes conjecture is open for $\SL_3(\ZI)$.)

In \cite{rd} we studied   ``rank interpolation"  for countable groups, that is, interpolation of the rank in between the usual $\mathrm{rk}= 1,2,\ldots$ integer values.  
The main applications presented in \cite{rd}  are $C^*$-algebraic in nature and in particular, we established property RD for many groups of intermediate rank. This provided new examples where Lafforgue's approach to the Baum--Connes could be applied (in fact for many of these groups---e.g. for groups of rank \sq---this is also the only approach that is presently known to work, and the Baum-Connes conjecture with coefficients is open). See also \cite{notewise} and \cite{bs} for other results on intermediate rank and property RD. 
The accent in \cite{rd} is on interpolating the rank between 1 and 2, which includes a large class of groups of interest. In the present paper we will see that $B_4$ is also a group of  intermediate rank, which interpolate the  rank between 2 and 3.

\section{Proof of Theorem \ref{th1}}\label{s2}

The group $B_4$ admits the following presentation: 
\[
B_4=\langle a,b,c\mid aba=bab, bcb=cbc, ac=ca\rangle.
\]
The pure braid group $P_4$ is the kernel of the surjective homomorphism to the symmetric group on 4 letters,
\[
B_4\to S_4,
\]
mapping a braid to the corresponding permutation of its endpoints. 
It is well-known that the center of both $B_4$ and $P_4$ is the cyclic group generated by the element $z=(bac)^4$, which consists in a full-twist braiding of the 4 strings (see \cite[Section 10.B]{BZ} for instance; this is known to hold for more general Artin groups \cite{BS,Deligne}). 
In other words $B_4$ is a central extension of the group 
\[
G=B_4/\langle z\rangle
\]
by the groups of integers $\ZI=\langle z\rangle$, which gives an exact sequence
\[
1\longrightarrow \ZI\longrightarrow B_4\longrightarrow G\longrightarrow 1,
\]
and in the same way,
\[
1\longrightarrow \ZI\longrightarrow P_4\longrightarrow H\longrightarrow 1,
\]
where $H=P_4/\ZI$ is a finite index subgroup of $G$. The torsion in $G$ corresponds to the the conjugacy classes of the elements $x=bac$ and $y=bac^2$ and their powers, where we have $x^4=y^3=z$ (see \cite[p. 139]{crisp} for a geometric proof of this fact; recall that $G_4$ itself is torsion free). It follows that $H$ is torsion free. 

We will need some results of Brady \cite{Brady} and their extensions in Crisp--Paoluzzi \cite[Section 3]{crisp}. Let $Y$ be classifying space of $B_4$  constructed in \cite{Brady}. As recalled in the introduction, $Y$ is a CAT(0) simplicial complex of dimension 3 whose 3-dimensional faces are Euclidean tetrahedra. The authors of \cite{crisp} consider the projection in $Y$ along the $z$-axis and obtain a 2-dimensional complex $X$ (called the Brady complex there) together with an action of $G$  (called the standard action, in view of \cite[Theorem 1]{crisp}) which commutes to the action of $B_4$ on $\Sigma$ under taking projection. The complex $Y$ splits metrically as a product: 
\[
Y=X\times \RI
\] 
and $X$ is endowed with an action of $G$ (in Section \ref{s4} we will give more details on these constructions). 

As a CAT(0) space, $X$ is a triangle polyhedron, i.e. its faces are equilateral triangles of the Euclidean plane (\cite[p. 140]{crisp}), and the action of $G$ on $X$ is proper with compact quotient. Thus $H$ acts freely with compact quotient on $X$, so $H$ appears as  the fundamental group of the complex 
\[
V=X/H
\]
(it can be shown that $V$ has 6 vertices and 32 faces). It follows then from Theorem \ref{th3} that  $H$ has property RD with respect to some (hence any) finite generating set. As $H$ is a finite index normal subgroup of $G$ this implies, by Proposition 2.1.4 in  \cite{Jol-def},   that $G$ itself has property RD.

Further results of Jolissaint \cite{Jol-def} (in particular Proposition 2.1.9 of that paper, see also  Chatterji--Pittet--Saloff-Coste \cite[Proposition 7.2]{CPSC}) show that property RD is stable under certain types of central extensions. We will prove that these results can by applied to the present situation and this will conclude the proof of Theorem \ref{th1}.

Consider the section 
\[
\kappa : G\to B_4
\]
of the quotient map $\pi : B_4\to G$, which identifies $G$ as the subset of braids in $B_4$ whose central part is trivial. Being a central extension of $G$, we can decompose $B_4$ as a  product
\[
B_4=\ZI\times_c G
\] 
where the value at a point $(g,h)\in G\times G$  of the cocycle  
\[
c : G\times G \to \ZI
\]
defining the extension is the exponent of $z\in G$ in the central element 
\[
\kappa(g)\kappa(h)\kappa(gh)^{-1}
\] 
of $G$.

Our goal is to find a symmetric finite generating set of $G$ such that, for the corresponding Cayley graph $Y_G$ of $G$, we have
\[
|c(g,h)|\leq n
\]
for every elements $g,h\in G$ at distance at most $n$ from the identity in $Y_G$.
  That this implies property RD for $B_4$  follows from   \cite[Proposition 7.2]{CPSC}.

Let us fix some notations regarding the Thurston normal form for elements of $B_4$ (see \cite[Chapter 9]{thurston} and \cite{Charney-biaut}). 
In what follows we write $\Delta=(bac)^2$ for the half twist of the four strings. 

The braid group $B_4$ can be generated by a set $S$ of 23 elements, which are in bijective correspondence with the non trivial elements  of the symmetric quotient $S_4$. The half-twist $\Delta$ belongs to $S$. Furthermore in this presentation, the monoid $B_4^+$ of positive braids is the submonoid of $G$ generated by $S$, and  every elements $s\in B_4^+$ can be written in a canonical way 
\[
s=s_1\ldots s_n,
\]
called the greedy form of $s$, where $s_i\in S$  (see \cite{Garside,thurston}, for instance one can consider the right greedy form where the element $\Delta$ appears only on the right side of the expression $s_1\ldots s_n$). This decomposition can be extended to $B_4$:  by  \cite{thurston}, every $x\in B_4$ can be written as $x=s^{-1}t$ with $s,t\in B_4^+$, in a unique way (after obvious cancellation in case both  $s$ and $t$ start with the same element $r\in B_4^+$). Thus any elements $x\in B_4$ can be written in a canonical form
\[
x=s_n^{-1}\ldots s_1^{-1}t_1\ldots t_m,
\]
where $s_i,t_j\in S$. The latter decomposition is called the  Thurston normal form (or the Garside normal form) of $x$. Following \cite{charney-n}, we let 
\[
|x|=n+m,
\]
where $n$ and $m$ are given by the  normal form. The language associated to this normal form turns out to give  a geodesic biautomatic structure on $B_4$ (see \cite{thurston,charney-n}), and if we denote by $Y$ the cayley graph of $B_4$ with respect to $S\cup S^{-1}$, then $|x|$ is the length of a simplicial geodesic in $Y$ from $e$ to $x\in B_4$. In particular for $x,y\in B_4$ we have
\[
|xy|\leq |x|+|y|
\]
(see  \cite[Lemma 3.4]{charney-n}).

Let $Y_G$ be the Cayley graph of $G$ with respect to the generating set $\pi(S\cup S^{-1})$. It is easily seen that 
\[
|\kappa(gh)|\leq |\kappa(g)|+|\kappa(h)|
\]
since $\kappa(gh)$ is obtained from the product $\kappa(g)\kappa(h)$ by cancellation of the central factor. In particular 
\[
\kappa(\mathrm{Ball}_n(Y_G))\subset \mathrm{Ball}_n(Y),
\]
where $\mathrm{Ball}_n(\cdot)$ is the ball of radius $n$ in the corresponding Cayley graph. On the other hand, since $\Delta\in S$ and $z=\Delta^2$, the absolute value of the exponent of $z$ in the central part of an $x\in B_4$ is at most $|x|/2$ by construction of the normal form of $x$.

Let $g,h\in G$ at distance at most $n$ from the identity in $Y_G$. By definition, the value of $c(g,h)$ is the exponent of $z$ in the central part of  $\kappa(g)\kappa(h)$. Thus
\[
|c(g,h)|\leq |\kappa(g)\kappa(h)|/2\leq (|\kappa(g)| + | \kappa(h)|)/2\leq n.
\]
This concludes the proof of Theorem \ref{th1}.

\section{Some classical applications of property RD}\label{s4}

We present below two classical applications of property RD. The first one concerns the Baum-Connes conjecture and the second one is about random walks, which gives use some useful information on random walks on $B_4$. For further consequences of property RD we refer to Valette's book \cite{Val-bc} and to the references there. 

\subsection{Braid groups  and the Baum--Connes conjecture}

As is well-known, the Baum--Connes conjecture \emph{with coefficients}  holds for the $n$-string pure braid group $P_n$, as well as for its finite extension $B_n$  (see \cite{oyono,schick}).  On the other hand,  in the case $n=4$, we have property RD  and thus the  Banach $KK$-theory techniques of Lafforgue \cite{Laf-bc} applies as well. Hence:

\begin{corollary}  The groups  $B_4$, $P_4$ and their respective central quotients,  $G$ and $H$, satisfy the Baum--Connes conjecture without coefficient. 
\end{corollary}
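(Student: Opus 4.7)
The plan is to apply Lafforgue's theorem~\cite{Laf-bc}: a countable group that enjoys property RD and acts properly and isometrically on a strongly bolic metric space satisfies the Baum--Connes conjecture without coefficient. Finite-dimensional CAT(0) simplicial complexes are a standard example of strongly bolic spaces (see~\cite{Laf-bc}), so the proof reduces to two verifications for each of the four groups $B_4$, $P_4$, $G$, $H$: property RD, and a proper isometric action with compact quotient on a finite-dimensional CAT(0) complex.

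For property RD, Theorem~\ref{th1} directly covers $B_4$ and its central quotient $G=B_4/\langle z\rangle$. The pure braid group $P_4$ is the kernel of $B_4\to S_4$, hence a finite index normal subgroup of $B_4$, and $H=P_4/\langle z\rangle$ is a finite index normal subgroup of $G$. Since property RD passes to subgroups with respect to the induced word length (Proposition~2.1.1 of~\cite{Jol-def}), both $P_4$ and $H$ inherit property RD from their overgroups.

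For the geometric hypothesis, the action of $B_4$ on the CAT(0) complex $Y=X\times\RI$ of Brady~\cite{Brady} is proper, isometric and cocompact by construction, and restricts to an action of the finite index subgroup $P_4$ that is still proper, isometric and cocompact. Similarly, the standard action of $G$ on the two-dimensional CAT(0) triangle polyhedron $X$ recalled in Section~\ref{s2} restricts to such an action of $H$. As $X$ and $Y$ are finite-dimensional CAT(0) simplicial complexes, they are strongly bolic in the sense of~\cite{Laf-bc}.

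Plugging each of the four pairs (group, space) into Lafforgue's theorem then yields the corollary. There is no substantive obstacle here: all the hard work has been done in Theorem~\ref{th1} and in the geometric results of~\cite{Brady,crisp} invoked in Section~\ref{s2}, and the only routine points to be careful about are the descent of property RD to a finite index subgroup and the (by now well-known) verification of strong bolicity for finite-dimensional CAT(0) complexes.
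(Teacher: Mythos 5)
Your proposal is correct and follows essentially the same route as the paper: combine property RD (Theorem \ref{th1}, with the standard descent to the finite-index subgroups $P_4$ and $H$) with the proper cocompact isometric actions on the strongly bolic CAT(0) complexes $Y$ and $X$, and invoke Lafforgue's theorem \cite{Laf-bc}. The paper only adds the remark that for $B_4$ and $P_4$ (though not for $G$ and $H$) the conclusion also follows from the known Baum--Connes conjecture \emph{with} coefficients via \cite{oyono,schick}.
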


The Baum--Connes conjecture (even without coefficients) has a number of applications. See \cite{Val-bc} for more details.
The problem of showing the Baum-Connes conjecture with coefficients for  groups  acting freely isometrically with compact quotient on a CAT(0) space satisfying the assumption of Theorem \ref{th3} is open. 

As far as we know, the Baum--Connes conjecture for the central quotients of $B_n$ and $P_n$ is open for $n\geq 5$.

\subsection{$\ell^2$ spectral radius of random walks on $B_4$} Another application of property RD concerns random walks on groups, see  Grigorchuk and Nagnibeda  \cite{Grigo} and the end of Section 2.2 in \cite{rd} for more details and references. 

If $G$ is a countable group endowed with a length, one considers the \emph{operator growth function of $G$},  
\[
F_\reg(z)=\sum_{n} a_n z^n
\]
where the coefficients $a_n$ are bounded operators on $\ell^2(G)$ defined by  
\[
a_n =\sum_{|x|=n} u_x
\]
with $u_x$, $x\in G$,  the canonical family of unitary operators corresponding to $G$ in $C^*_\reg(G)$ under the regular representation.
The radius of convergence $\rho_\reg$ of $F_\reg$ defined by 
\[
{1\over {\rho_\reg}}=\limsup_{n\to \infty} \|a_n\|_\reg^{1/n}
\] 
is no lower than the radius of convergence of the usual  growth series of the group $G$ with respect to $\ell$. Conjecture 2 in \cite{Grigo} states that $G$ is amenable if and only if $\rho=\rho_\reg$. For groups with property RD (in fact ``radial subexponential" property RD is sufficient, see   \cite[Proposition 23]{rd} and  references) we have $\rho_\reg=\sqrt{\rho}$ and thus the above Conjecture 2 holds. One can also deduce the $\ell^2$ spectral radius property for every element in the group algebra of $G$ provided $G$ has (subexponential) property RD, i.e., the fact that  the spectral radius of every element $a\in \CI G$ acting by convolution on $\ell^2(G)$  is equal to
\[
\lim_{n\to\infty} \|a^{*n}\|_2^{1/n}
\]
(which also has some important applications, again see the references in \cite{rd}). Thus we obtain:

\begin{corollary}\label{cor}  The groups $B_4$, $P_4$ and  their respective central quotients,  $G$ and $H$, satisfy the $\ell^2$ spectral radius property. Furthermore for these four groups the reduced spectral radius $\rho_\reg$ and the radius of convergence $\rho$ of the usual growth series  are related as follows:
\[ 
\rho_\reg=\sqrt{\rho}<1,
\]
and thus these groups satisfy Conjecture 2 in \cite{Grigo}.
\end{corollary}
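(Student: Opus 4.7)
The plan is to combine Theorem \ref{th1} with the general machinery linking property RD to spectral radii of random walks, summarized in Section 2.2 of \cite{rd} and originating in \cite{Grigo}. Three points have to be verified: property RD for each of the four groups with respect to the word length, the equality $\rho_\reg = \sqrt{\rho}$ together with the $\ell^2$ spectral radius property, and the strict inequality $\rho < 1$.

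Theorem \ref{th1} directly gives property RD for $B_4$ and for $G = B_4/\langle z\rangle$. The groups $P_4$ and $H$ are finite-index subgroups of $B_4$ and $G$, and their word lengths are bi-Lipschitz equivalent to the restrictions of the ambient word lengths, so Proposition 2.1.4 of \cite{Jol-def} transfers property RD to them as well. Property RD in this polynomial form is in particular radially subexponential, hence \cite[Proposition 23]{rd} applies simultaneously to all four groups, yielding both $\rho_\reg = \sqrt{\rho}$ and the $\ell^2$ spectral radius identity $\lim_n \|a^{*n}\|_2^{1/n} = \rho_\reg(a)$ for every $a$ in the corresponding complex group algebra; this also verifies Conjecture 2 of \cite{Grigo} for these four groups.

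It remains to check $\rho < 1$, i.e.\ exponential growth. The subgroup of $B_4$ generated by $\sigma_1,\sigma_2$ is isomorphic to $B_3$, whose commutator subgroup $[B_3,B_3]$ avoids the center $\langle z'\rangle = \langle (\sigma_1\sigma_2)^3\rangle$ (the image of $z'$ in the abelianization $B_3^{\ab}=\ZI$ is non-zero), and so embeds into $B_3/\langle z'\rangle = \PSL_2(\ZI)$, where it coincides with the full commutator subgroup, a non-abelian free group. This free subgroup witnesses exponential growth of $B_4$, and the same argument applied after projection shows exponential growth of $G$; the property then transfers to the finite-index subgroups $P_4$ and $H$. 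The only mildly delicate bookkeeping is making sure that the $\rho$ of \cite[Proposition 23]{rd} is the radius of convergence of the growth series appearing in the statement of the corollary, which is immediate once the same finite generating set is fixed throughout.
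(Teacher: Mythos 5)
Your proposal is correct and follows essentially the same route as the paper: property RD from Theorem \ref{th1} (transferred to the finite-index subgroups $P_4$ and $H$), then the machinery of \cite[Proposition 23]{rd} to get $\rho_\reg=\sqrt{\rho}$, the $\ell^2$ spectral radius property, and Conjecture 2 of \cite{Grigo}. The only point you treat more carefully than the paper is the strict inequality $\rho<1$, which the paper leaves implicit; your verification via the free subgroup $[B_3,B_3]\cong F_2$ inside $B_4$ (and its injective image in $G$, since it meets $\langle z\rangle$ trivially) is a correct way to supply it.
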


\section{Mesoscopic rank}\label{s3}

Let $X$ be a piecewise Euclidean CAT(0) simplicial complex of dimension $n\geq 2$, without boundary, and let $A$ be a point of $X$ (see \cite{BH} for a general reference on CAT(0) spaces).
 We call \emph{mesoscopic rank profile} of $X$ at $A$ the function 
\[
\varphi_A : \RI_+\to \NI
\] 
which associate to an $r\in \RI_+$ the number of distinct flat balls of radius $r$ in $X$ which are centered at $A$, and which are not included in a flat of $X$.
By  a flat in $X$ (resp. flat subset of $X$) we mean an isometric embedding of the Euclidean space $\RI^n$ in $X$ (resp. of a subset of an Euclidean space $\RI^n$ with the induced metric). 

We then have  the following.

\begin{proposition}[see \cite{rd}]\label{p7}
Let $X$ be a piecewise Euclidean CAT(0) simplicial complex without boundary and let $A$ be a point of $X$. Then,
\begin{enumerate}
\item if $X$ is hyperbolic,  $\varphi_A$ is compactly supported;
\item if $X$ is an affine Bruhat-Tits building, $\varphi_A$ vanishes identically.
\end{enumerate} 
\end{proposition}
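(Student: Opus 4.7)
\medskip
\noindent\textbf{Plan.} The two parts rest on very different features of $X$, so I would treat them separately.

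For (1), the plan is to use Gromov hyperbolicity to give an a priori bound on the radius of every flat ball in $X$. Since $\dim X=n\geq 2$, a flat ball of radius $r$ contains an isometric copy of a Euclidean equilateral triangle of side comparable to $r$: for instance, one inscribed in a great circle of the boundary sphere of the ball has side $r\sqrt 3$. A direct Euclidean calculation shows that in an equilateral triangle of side $s$, the midpoint of one side sits at distance $s\sqrt 3/4$ from the union of the other two sides. In a $\delta$-hyperbolic geodesic space every geodesic triangle is $\delta$-thin, so $s\sqrt 3/4\leq\delta$, giving $r\leq 4\delta/3$. Setting $R=R(\delta)=4\delta/3$ yields $\varphi_A(r)=0$ for $r>R$, so $\varphi_A$ is compactly supported.

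For (2), the plan is to show that every flat ball in $X$ is contained in an apartment (which is a flat of $X$, being an isometric copy of $\RI^n$), whence $\varphi_A\equiv 0$. Fix a flat ball $B\subset X$ of radius $r$ centered at $A$. The embedding of a Euclidean ball gives a full $(n-1)$-sphere of directions at $A$, and by the standard structure of affine Bruhat--Tits buildings this sphere must coincide with an apartment $\Sigma$ of the spherical link of $A$. A standard fact about such buildings is that spherical apartments in the link at $A$ are in bijection with affine apartments of $X$ through $A$; let $\mathcal A\cong\RI^n$ be the affine apartment produced by $\Sigma$. I would then verify $B\subset\mathcal A$: for each $p\in B$ the geodesic from $A$ to $p$ in $X$ is unique, agrees with the Euclidean segment inside $B$, has initial direction in $\Sigma$, and extends unambiguously inside the flat $\mathcal A$; CAT(0) uniqueness forces the two extensions to coincide, so $p\in\mathcal A$.

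The main obstacle lies in (2): the germ-level identification of $B$ and $\mathcal A$ at $A$ must be promoted to the global containment $B\subset\mathcal A$. My plan is to combine the uniqueness of geodesics in the CAT(0) space $X$ with the fact that an apartment of an affine building is a geodesically complete flat of $X$, which jointly pin down each radial geodesic of $B$ once its germ is known to lie in $\mathcal A$. Part (1) then reduces to the elementary thin-triangle estimate above and should be essentially routine.
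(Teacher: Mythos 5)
Part (1) of your proposal is correct. The paper itself only gives a one-line justification (it refers to \cite{rd} and remarks that a hyperbolic complex has ``no flat at all''), and your thin-triangle computation --- a flat ball of radius $r$ contains a Euclidean equilateral triangle of side $r\sqrt3$ whose sides are genuine geodesics of $X$, forcing $3r/4\leq\delta$ --- is a clean and complete way to make that remark precise. Nothing to object to there.

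Part (2), however, has a genuine gap at exactly the step you flag as ``the main obstacle,'' and the plan you sketch for overcoming it does not work. Two distinct problems. First, spherical apartments of the link at $A$ are \emph{not} in bijection with affine apartments of $X$ through $A$: in a thick building uncountably many apartments through $A$ induce the same sphere of directions at $A$ (already in a tree, infinitely many bi-infinite lines share a given pair of initial edges at a vertex), so ``the affine apartment $\mathcal A$ produced by $\Sigma$'' is not well defined, and for most apartments inducing $\Sigma$ the containment $B\subset\mathcal A$ is simply false. Second, and more fundamentally, the concluding step --- the radial geodesic $[A,p]$ ``has initial direction in $\Sigma$ \ldots\ extends unambiguously inside the flat $\mathcal A$; CAT(0) uniqueness forces the two extensions to coincide'' --- is invalid. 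CAT(0) uniqueness concerns geodesics with prescribed \emph{endpoints}, not prescribed germs; in a thick Euclidean building a geodesic germ does not extend uniquely (branching along walls is precisely what thickness produces), so $[A,p]$ can agree with $\mathcal A$ to first order at $A$ and still leave $\mathcal A$ well before reaching $p$. What your argument actually needs is the nontrivial theorem that every top-dimensional flat ball in a Euclidean building is contained in an apartment --- proved, e.g., via the $1$-Lipschitz retraction onto an apartment containing a chamber germ of $B$, or by propagating flatness outward from a pair of opposite chamber germs --- and this is exactly the content the paper imports from \cite{rd} (``every flat ball is included in uncountably many flats''). As written, your proof of (2) assumes this fact rather than establishing it.
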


We refer to \cite[Section 6]{rd}, where this theorem is stated for triangle polyhedra but the proof extends to the above general situation (in the first case there is no flat at all, while in the second, we have in fact that every flat ball is included in uncountably many flats).

According to Proposition \ref{p7},  the mesoscopic rank profile trivializes when the rank takes the usual $\mathrm{rk}=1,2,3,\ldots$ integer values. The following property detects spaces of intermediate rank where, more precisely, intermediate rank occurs  (exponentially) in between the local and asymptotic scale in  $X$:

\begin{definition}
The space $X$ is said to have \emph{exponential mesoscopic rank} at $A$ if the function $\varphi_A$ converges exponentially fast to infinity at infinity.   
\end{definition}

Mere \emph{mesoscopic rank} refers to the fact that the support of $\varphi_A$ contains a neighbourhood of infinity. Thus for spaces of mesoscopic rank at a point $A$, on can continuously rescale the radius of balls of center $A$ from some constant $C$ up to $\infty$, in such a way that all the balls in this family are flat but not  included in flats. When the mesoscopic rank is exponential, the number of possible choices for these balls varies exponentially with the radius.

\begin{definition}
A group $G$ is said to be of \emph{exponential mesoscopic rank} if it admits a proper isometric action with compact quotient on a CAT(0) space which is of exponential mesoscopic rank at least at one point (and thus at infinitely many points).  
\end{definition}

The following groups are known to be of exponential mesoscopic rank:

\begin{itemize}
\item[(a)] The group denoted $\G_{\bowtie}$ in \cite{rd}, and called group of frieze there (see \cite[Section 6.1]{rd}); 
\item[(b)] The group of rank \sq\ which is the fundamental group of the  complex denoted $V_0^1$ in \cite{rd} (see \cite[Section 6.1]{rd});
\item[(c)] D. Wise's non Hopfian group (see \cite{notewise,bs}).
\end{itemize}

In the present paper we add further groups to this list, namely $B_4$ and its central quotient (as well as the group $G_0$ of Section \ref{more}).

\begin{remark}
Most  of the groups of rank \sq\  (see \cite[Section 4]{rd}) might be of exponential mesoscopic rank. Besides the one of Item (b) above, one can get more examples from the classification of transitive orientable  groups of rank \sq\ given in \cite[Theorem 4]{rd}, but we presently have no general (say local or semi-local) criterion ensuring exponential mesoscopic rank (compare Section \ref{more} below). Another interesting problem is to prove or disprove the existence of groups of mesoscopic rank for which the mesoscopic rank profile  at some vertex grows faster than polynomials but slower than exponential functions.
\end{remark}

\section{Proof of Theorem \ref{meso}}\label{s3'}

We prove that the Braid group $B_4$ and its central quotient $G=B_4/\langle z \rangle$ are of exponential mesoscopic rank, respectively, in Section \ref{63} and Section \ref{62}. 

\subsection{A closer look at the 4-string complexes $Y$ and $X$.} Let us first recall in some more details the description of the Brady action of $B_4$ on $Y$ and  its quotient action of $G$ on $X$, following \cite{Brady} and \cite{crisp}. 
Consider the following presentation of $B_4$,
\begin{align*}
B_4=\langle a,b,c,d,e,f\mid &ba=ae=eb,\, de=ec=cd,\\
&bc=cf=fb,\, df=fa=ad,\\
&ca=ac,\, ef=fe\rangle,
\end{align*}
and let us keep the notations  $x=bac$ and $y=bac^2$, so that $x^4=y^3=z$ generates the center of $B_4$. There are exactly sixteen ways to write $x$ as a product of three of the generators $a,\ldots, f$. These can be expressed as the length 3 subwords of the following two words of length 12: 
\[
W_1=bcadefbacdfe;\hskip1.5cm W_2=faecfaecfaec,
\]
which are representative for the central element $x^4=y^3=z$ in $B_4$ (see \cite[page 139]{crisp}). To each of these expressions $x=a_1a_2a_3$ one associates an Euclidean tetrahedron whose faces are right-angled triangles, and whose edges have length 
\[
|x|=\sqrt3;~~ |a_i|=1;~~  |a_1a_2|=|a_2a_3|=\sqrt2.
\]
The corresponding labelled tetrahedra can be assembled to form a compact complex $V$ such that $\pi_1(V)=B_4$. Then $Y=\tilde V$ is the universal cover of $V$ with the corresponding deck-transformation action of $B_4$.

The CAT(0) space $Y$ splits as a metric product $Y=X\times \RI$, where $X$ is the range of a projection of $Y$ along the $z$-axis. 
The image of each tetrahedron in $Y$ under this projection is an Euclidean equilateral triangle in $X$ and the action of $B_4$ factors out to a simplicial action of $G=B_4/\langle z\rangle$ on $X$, which is proper and cocompact.   The Cayley graph of $G$ with respect to the generating set $S=\{a,\ldots, f\}$ (where the above $a,\ldots, f$ are viewed as elements of $G$ under a slight abuse of notation)  is a 4-to-1 cover of the 1-skeleton of $X$. 


Links at vertices in  $X$ are represented on Figure \ref{fig1} below, where the right hand side representation corresponds to Figure 3 in \cite{Brady} and  Figure 6 in \cite{crisp}. The equivalent left hand side representation is included  for future reference (see Section \ref{more}).

\begin{figure}[htbp]
\centerline{\includegraphics[width=13cm]{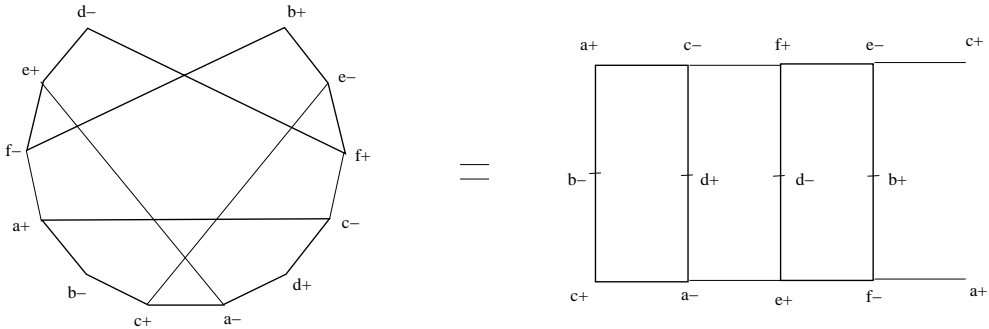}}
\caption{The link $L$ and its labelling}\label{fig1}
\end{figure}
The labellings on these figure corresponds to edges of the generating set $S$ entering and leaving the given vertex (this depends on the choice of a representative of the coset of $\langle x\rangle$ in $G$, but a different choice will simply relabel the link according to the action of an element of stabilizer of the vertex, see \cite[p. 141]{crisp}).

We call \emph{lozenge} in $X$ the reunion of two triangles glued along an edge of valence 2, and restrict from now on the terminology \emph{triangle of $X$} to  those equilateral triangles in $X$ which are not included in a lozenge.  According to the description given on p. 160 of \cite{crisp}, the complex $X$ is built out of triangles and lozenges, all of whose edges being trivalent in $X$ and labelled in the same way. There are three types of corners: in triangles, all angles labelled by 1, while in lozenges the angles are labelled 2 or 3 depending of whether it equals $\pi/3$ or $2\pi/3$. Then triangles and lozenges in the complex  $X$ are arranged in such a way that the labelled link at each vertex matches that given on the following Fig. \ref{figX} (our notations differ slightly from those of \cite{crisp}, in particular our label 3 correspond to $2T_3^-$ and $2T_3^+$ in \cite[Fig. 19]{crisp}). 

\begin{figure}[htbp]
\centerline{\includegraphics[width=13.5cm]{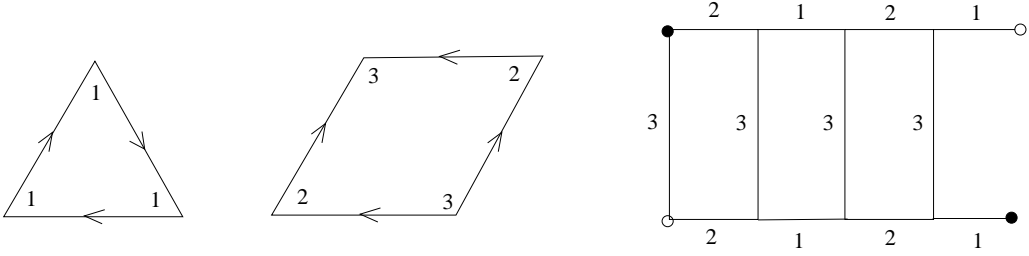}}
\caption{Description of the complex $X$}\label{figX}
\end{figure}

\subsection{Exponential mesoscopic rank for $X$ and   $G=B_4/\langle z\rangle$}\label{62} The proof will follow the strategy of \cite{rd} (see Section 6.1 and 6.2), with the additional difficulty that the link $L$  embeds into the incidence graph of the Fano plane (compare  Section \ref{more}). 
Let us first derive a few elementary lemmas regarding the local structure of $X$.

\begin{lemma}\label{loz}
Let $R$ be a lozenge of $X$. Any boundary edge  of $R$ is incident to exactly a triangle and a lozenge $R'\neq R$ of $X$. Furthermore, $R\cup R'$ is isometric to a parallelogram and we will say that $R$ and $R'$ are \emph{aligned} in $X$.
\end{lemma}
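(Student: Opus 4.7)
The plan is a local argument using the labelling of the link $L$ at each vertex of $X$. Let $e$ be a boundary edge of the lozenge $R$, with endpoints $v_1, v_2$. Since every edge of $X$ is trivalent, $e$ is incident to exactly three faces of $X$: one triangle of $R$ itself, and two others, call them $F_1$ and $F_2$. Recall that each boundary edge of a lozenge joins a $\pi/3$-vertex to a $2\pi/3$-vertex, so we may assume $R$ contributes a label-$2$ corner at $v_1$ and a label-$3$ corner at $v_2$.

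The key input, read off from Figure \ref{figX}, is that at every vertex of the link $L$ the three incident edges carry the labels $\{1,2,3\}$, one of each type. Apply this at the link-vertex corresponding to $e$ at $v_1$: the label-$2$ slot is taken by the corner of $R$, hence $F_1$ and $F_2$ contribute one label-$1$ and one label-$3$ corner at $v_1$. A label-$1$ corner necessarily comes from a triangle (in the restricted terminology of the lemma, since label-$1$ corners are exactly those of equilateral triangles not sitting inside a lozenge), whereas a label-$3$ corner is a $2\pi/3$-corner of some lozenge $R'$. The same analysis at $v_2$ shows that the label-$3$ slot is taken by $R$, forcing $F_1$ and $F_2$ to supply a label-$1$ and a label-$2$ corner. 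Combining both endpoints and using that a triangle has label-$1$ corners at every vertex of each of its edges, while a lozenge corners its boundary edges by a label-$3$/label-$2$ pair, we conclude that one of $F_1,F_2$ is a triangle of $X$ and the other is a lozenge $R'$. One has $R'\neq R$, since the two corners of $R$ incident to $e$ are the label-$2$ corner at $v_1$ and the label-$3$ corner at $v_2$, both already attributed to $R$'s own triangle along $e$.

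Finally, the interior angles of $R$ and $R'$ at $v_i$ sum to $\pi/3+2\pi/3=\pi$ for $i=1,2$, so the developing map of $R\cup R'$ across the shared edge $e$ produces an isometric embedding into the Euclidean plane whose image is a parallelogram; this is the claimed alignment. The main (and essentially the only) non-formal step is the verification, by inspection of Figure \ref{figX}, that every vertex of $L$ meets edges of all three labels exactly once. Since $L$ is a finite labelled graph, this is a bounded combinatorial check, and once granted, the rest of the argument is a direct consequence of trivalence of edges together with the corner-labelling conventions of triangles and lozenges.
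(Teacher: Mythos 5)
Your argument is correct, and it is the same kind of proof as the paper's: a purely local inspection of the link at the endpoints of the boundary edge $e$, using trivalence of $e$ to reduce everything to identifying the two remaining incident faces, followed by the standard angle-sum/developing argument for the parallelogram. The difference is in which combinatorial fact is extracted from the figures. The paper works only with valences in the unlabelled graph $L$: every trivalent vertex of $L$ is adjacent to a bivalent vertex, and bivalent vertices are at pairwise distance $\geq 3$ (hence each trivalent vertex has \emph{exactly one} bivalent neighbour). The unique bivalent neighbour produces the lozenge $R'$ with its $2\pi/3$ corner, and its uniqueness (applied at both endpoints of $e$) forces the third face to be a triangle. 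You instead take as input the labelled link of Fig.~\ref{figX} together with the observation that each trivalent vertex meets corners of all three labels exactly once. That observation is true and is a finite check on the given labelled graph, but be aware that it is very nearly a restatement of the lemma itself -- it already encodes ``one triangle and two lozenges, cornered at $\pi/3$ and $2\pi/3$ respectively, along every trivalent edge.'' Since the labelled link is supplied as data from Crisp--Paoluzzi, this is legitimate, just less economical: more of the content is pushed into the figure inspection than in the paper's valence-only argument. Two small points worth making explicit in your write-up: the face receiving label $1$ at $v_1$ must be the same as the one receiving label $1$ at $v_2$ (which follows, as you indicate, from the fact that a lozenge half never has a label-$1$ corner), and the label-$3$ ``corner'' is a path of two edges of $L$ through a bivalent vertex, so ``the three incident edges carry labels $1,2,3$'' should be read as a statement about the three corners at that vertex.
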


\begin{proof}
Let $R$ be a lozenge of $X$, $e=[A,B]$ be a boundary edge of $R$, where $A$ is the vertex of $e$ whose internal angle in $R$ is $2\pi/3$. The geometry of $L$ shows that there are two faces incident to $e$ which are not included in $R$. Inspecting the  link at $B$, we see that one of these faces is a triangle of $X$, while the other is a lozenge whose internal angle at $B$ is $2\pi/3$. This follows from the fact that every vertex of valence of 3 in $L$ is adjacent to a vertex of valence 2, and the vertices with valence 2  are at distance $\geq 3$ one from the other. Hence the lemma is proved.
\end{proof}

\begin{lemma}\label{hexa}
Let $R$ be a lozenge of $X$ and $A$ be a vertex of $R$ of internal angle $2\pi/3$. There are exactly two lozenges $R_1$ and $R_2$ in $X$ such that $R\cap R_1=R\cap R_2=\{A\}$ and such that both $R\cup R_1$ and $R\cup R_2$ are included in a flat hexagon of $X$. 
(This hexagon contains $R$ and $R_1$ (resp. $R$ and $R_2$) and the two triangles of $X$ containing $A$ and completing $R$ and $R_1$ (resp. $R_2$) to a local flat at $A$.)
\end{lemma}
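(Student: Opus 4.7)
The plan is to carry out the analysis in the link $L$ of $A$ in $X$. The corner of $R$ at $A$ corresponds to a type-$3$ arc of $L$ (of length $2\pi/3$), whose two endpoints are the valence-$3$ vertices $v_1, v_2$ of $L$ corresponding to the two boundary edges of $R$ at $A$. A lozenge $R_1$ satisfying the hypothesis of the lemma, together with $R$ and the two triangles $T_1, T_2$, gives rise to a closed geodesic cycle of length $2\pi$ in $L$ of pattern ``type-$3$ - type-$1$ - type-$3$ - type-$1$'' (since the angular contributions $2\pi/3 + \pi/3 + 2\pi/3 + \pi/3 = 2\pi$ are exactly what is needed for a local flat at $A$). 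Conversely, any such cycle, provided its type-$3$ arcs are disjoint, gives an admissible pair $(R, R_i)$. The problem thus reduces to counting such cycles in $L$ through the arc of $R$.

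First, by Lemma \ref{loz} applied to the two boundary edges of $R$ containing $A$, the three arcs of $L$ incident to each $v_i$ are the arc of $R$ (type-$3$), the arc of a triangle (type-$1$), and the arc of a lozenge aligned with $R$ (type-$2$). In particular, at each $v_i$ the type-$1$ arc is unique. Call this arc $T_2$ at $v_1$ (leading to some vertex $v_4$ of $L$) and $T_1$ at $v_2$ (leading to some vertex $v_3$). The sought cycle is then forced to continue through $v_3$ and $v_4$, and $R_1$ must be a type-$3$ arc of $L$ joining $v_3$ to $v_4$ with its valence-$2$ midpoint and its two valence-$3$ endpoints all distinct from the corresponding vertices of the arc of $R$; the latter disjointness encodes exactly the condition $R\cap R_1=\{A\}$.

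Next, I would inspect the link $L$ as described in Figure \ref{fig1} and count the type-$3$ arcs of $L$ from $v_3$ to $v_4$ disjoint from the arc of $R$. Using the structure of the link (its embedding into the incidence graph of the Fano plane, cf.\ Section \ref{more}) together with the constraint from Lemma \ref{loz} that valence-$2$ vertices of $L$ lie at pairwise distance $\geq 3$, one shows by direct case analysis that there are exactly two such arcs, yielding the two lozenges $R_1$ and $R_2$ of the statement. This combinatorial enumeration is the main obstacle: one must verify both that two candidates exist and that no further candidate exists, which depends finely on the link.

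Finally, for each candidate $R_i$, I would check that $R\cup R_i\cup T_1\cup T_2$ is isometric to a regular Euclidean hexagon of side $1$ centered at $A$. The four $2$-cells are by definition isometric to their Euclidean models and meet at $A$ with total angle exactly $2\pi$; the union is therefore locally flat at $A$, and locally flat at every other interior point (which lies on a single edge or face). Since $X$ is CAT($0$), no identifications can occur between the six outer vertices, so the union embeds isometrically as the claimed flat hexagon, containing $R$ and $R_i$ meeting precisely at $A$.
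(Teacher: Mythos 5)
There is a genuine gap. Working in the link $L_A$ is the right strategy (it is what the paper does), but you over-constrain the admissible configurations: you insist that the two length-$\pi/3$ edges joining the arc of $R$ to the arc of $R_i$ be of type $1$, i.e.\ that the $2\pi$-cycle have pattern $3$-$1$-$3$-$1$. Under that constraint the cycle is \emph{unique} when it exists, not twofold. Indeed, as you yourself note via Lemma \ref{loz}, each trivalent vertex of $L$ carries exactly one incident edge of each type $1$, $2$ and $3$; so $v_3$ and $v_4$ are forced, and each of them has a \emph{unique} incident type-$3$ half-edge (this is exactly the fact, which you quote, that every trivalent vertex is adjacent to at most one bivalent vertex because bivalent vertices are pairwise at distance $\geq 3$). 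Hence there is \emph{at most one} type-$3$ arc from $v_3$ to $v_4$, and your concluding count of ``exactly two such arcs'' is incompatible with the structure of $L$ that you correctly set up two sentences earlier: the deferred case analysis would return at most one candidate, not two.

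The missing lozenge comes from a cycle of pattern $3$-$2$-$3$-$2$: the flat hexagon containing $R\cup R_2$ is then filled in by the $\pi/3$-corners (i.e.\ by halves) of two lozenges aligned with $R$, rather than by two triangles of $X$; it is a flat hexagon of $X$ as a flat subset but not a union of cells. This is why the paper's proof counts, for the bivalent midpoint $x$ of the arc of $R$, the bivalent vertices $y$ of $L$ at distance $\pi$ from $x$ in a cycle of length $2\pi$, imposing \emph{no} condition on the labels of the two connecting edges. One can check concretely (say in the Heawood-graph model of $L$, with a fishbone removed) that the four bivalent vertices split into two pairs at mutual distance $4$, that each bivalent vertex is antipodal in a hexagonal cycle to exactly the two bivalent vertices of the other pair, and that of the two resulting cycles one is of type $3$-$1$-$3$-$1$ and the other of type $3$-$2$-$3$-$2$ (the type-$1$ and type-$2$ edges form two perfect matchings which alternate around an $8$-cycle on the trivalent vertices, so the two connecting edges of a given hexagonal cycle always carry the same label, and the labels differ between the two cycles). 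Your final paragraph, upgrading the local flat at $A$ to an embedded flat hexagon via CAT(0), is fine; the error is entirely in the enumeration step.
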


\begin{proof}
The assertion follows from the fact that, given a vertex $x$ of valence 2 in $L$, there are exactly two vertices $y$ and $z$ of valence 2 in $L$ such that $(x,y)$ on the one hand, and $(x,z)$ on the other, are at distance $\pi$ in a cycle of $L$ of length $2\pi$.   
\end{proof}

\begin{lemma}\label{Lpi}
Let $x$ and $y$ be two trivalent vertex at distance $\pi$ in  $L$. Then there are precisely three distinct paths of length $\pi$  with extremities $x$ and $y$ in $L$. Depending on the position of $x$ and $y$ in $L$ these paths   are  labelled in either one of the following two ways (with the labelling given by Fig. \ref{figX}):

\begin{itemize} 
\item Case I:  2-3, ~ ~3-2, ~ ~and~ ~ 1-2-1;
\item Case II: 1-3, 3-1, and 2-1-2.
\end{itemize}
\end{lemma}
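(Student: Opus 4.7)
The plan is to establish the lemma by an explicit case analysis on the link $L$ using its labelled description in Figure~\ref{figX}. Since each edge of $L$ has length $\pi/3$ (when labelled $1$ or $2$) or $2\pi/3$ (when labelled $3$), any path of total angular length exactly $\pi$ must have either two edges with labels of the form $\{3,i\}$ with $i\in\{1,2\}$, or three edges with labels in $\{1,2\}$. Thus there are, a priori, only finitely many possible label sequences to check for each pair $(x,y)$, and the task reduces to determining which of them are realised in $L$ between two prescribed trivalent vertices at distance $\pi$.

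First I would reduce to a small number of configurations using the symmetries of the labelled link $L$ (which are evident from the two equivalent depictions in Figure~\ref{fig1}). The expectation is that the action of these symmetries on the set of ordered pairs of trivalent vertices at distance $\pi$ has exactly two orbits, which correspond to Case I and Case II of the statement. A convenient invariant to distinguish the cases is the label of the edge starting the two length-two paths at $x$: if it is $2$ (resp.\ $1$), then we are in Case I (resp.\ Case II). One then picks one representative pair in each orbit and proceeds to enumerate all paths of length $\pi$ from $x$ to $y$.

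Second, for each representative pair I would list the paths by walking through $L$. The length-two paths are essentially forced by the local structure at $x$ and $y$: from $x$ there are only three edges, and the constraint that the path has length $\pi$ immediately restricts the first edge to a label-$2$ or label-$3$ choice in Case~I (resp.\ label-$1$ or label-$3$ in Case~II), after which the endpoint and the label of the last edge are essentially determined by Figure~\ref{figX}. For the length-three path, one exhibits the explicit path (with labels $1$-$2$-$1$ or $2$-$1$-$2$ respectively) and checks that any other label sequence summing to $\pi$ would force an adjacency in $L$ that does not occur. The cycles of length $2\pi$ detected in Lemma~\ref{hexa} can be used to accelerate this: any two paths of length $\pi$ from $x$ to $y$ concatenate to a cycle of length $2\pi$ through $x$ and $y$, so controlling such cycles at an edge labelled $3$ controls the paths.

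The main obstacle will be the careful combinatorial bookkeeping in Figure~\ref{figX}, in particular verifying completeness (no hidden fourth path) and verifying that the two cases are genuinely distinct orbits rather than one orbit with a spurious splitting. Once the local labelling around each trivalent vertex of $L$ is read off unambiguously from the figure, however, these verifications reduce to a finite, entirely elementary check.
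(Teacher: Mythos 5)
Your proposal is correct and follows essentially the same route as the paper: reduce by the labelling-preserving symmetries of $L$ to representative pairs of trivalent vertices (the paper fixes $x=a^+$ and finds the two orbits $y=a^-$, $y=e^-$ corresponding to Cases I and II), then perform a finite check on the labelled link, with your edge-length bookkeeping ($\pi/3$ for labels $1,2$ and $2\pi/3$ for label $3$) making explicit the list of admissible label sequences that the paper simply reads off from Figure~\ref{figX}.
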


\begin{proof}
It is easily seen from the geometry of the link (Fig. \ref{figX}) that trivalent points at distance $\pi$ in $L$ can be joined by simplicial paths whose edges labelling are:
\begin{itemize}
\item[(a)] 1-3 (or 3-1)
\item[(b)] 2-3 (or 3-2)
\item[(c)] 1-2-1
\item[(d)] 2-1-2
\end{itemize}
and furthermore that there are precisely three distinct paths between any two such points. Indeed, the group of labelling preserving automorphisms of  $L$  is homogeneous on trivalent vertices, so we can assume for instance that $x=a^+$ (in the notation of Fig. \ref{fig1}), in which case there are two possibilities for $y$, namely $y=a^-$ and $y=e^-$. The lemma follows, where case I corresponds to $y=a^-$ and case II to $y=e^-$.
\end{proof}

We call \emph{singular geodesic} in $X$ a CAT(0) geodesic of $X$ which is included in the 1-skeleton of $X$ (viewed with respect to the triangle/lozenge simplicialization, in particular, all edges of singular geodesics are of valence 3). 
It is easy to see that for $u=a$ or $u=c$, and every vertex $A$ in $X$, the vertices $u^iA$, $i\in \ZI$,  belong to a  singular geodesic of $X$. 
   Indeed, since the labellings of the link at each vertex $u^iA$ are given by permuting letters of $L$, it is sufficient to show that  the points of $L$ with label $u^-$ and $u^+$ are trivalent vertices at distance $\pi$ in $L$, which straightforward. We will denote this geodesic by $u^\ZI A$.

Recall that a  subset $S$ of $X$ is called a (flat) \emph{strip}  if it is isometric to a product $I\times \RI\subset \RI^2$ where $I$ is a compact interval of $\RI$. The boundary of $S$ is a reunion of two parallel geodesics of $X$, say $d$ and $d'$, and is denoted $(d,d')$. The \emph{height} of $S$ is the CAT(0) distance between $d$ and $d'$.

The following lemma asserts that singular geodesic in $X$ all appear as branching locus of flat strips of $X$. This property  is reminiscent of  affine Bruhat-Tits buildings (say, of dimension 2), where it is true in a somewhat stronger form (in particular strips may be extended arbitrarily there).

\begin{lemma}\label{smallstrip} Let $d$ be a singular geodesic of $X$. There are precisely three flats strips in $X$ of height at least $\sqrt 3/2$ whose pairwise intersection are reduced to $d$. 
\end{lemma}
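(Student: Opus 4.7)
The plan is to combine the local classification of $\pi$-arcs in $L$ given by Lemma \ref{Lpi} with the $\ZI$-invariance of singular geodesics to produce three global strips, and then to verify both their height and their pairwise disjointness directly from the link structure.

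First I would observe that any flat strip $S$ of height $\geq \sqrt{3}/2$ with $d$ as one of its two boundary geodesics determines, at each vertex $A\in d$, a path of length $\pi$ in $L_A = L$ joining the two antipodal trivalent points $x,y\in L$ corresponding to the two directions of $d$ at $A$: namely, the path encoding the angular sector swept by $S$ on its side of $d$ at $A$. By Lemma \ref{Lpi}, there are exactly three such paths at each vertex (falling into Case I or Case II, depending on the position of the pair $(x,y)$ in $L$). Consequently there are at most three candidate strips on each side of $d$, and it suffices to show that each of the three local choices extends uniquely to a global flat strip and that this strip has height at least $\sqrt{3}/2$.

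For the extension step, recall that $d = u^{\ZI} A$ for $u\in\{a,c\}$ and that $u$ acts by simplicial translation along $d$. Once an arc is chosen at $A$, the neighbouring vertex $uA$ inherits its arc by translation, and the coherence of the two rows of 2-cells is automatic because the edge of $d$ shared by $A$ and $uA$ has a well-defined side and the labelling of $L$ at consecutive vertices of $d$ is obtained by the action of $\langle u\rangle$ on $L$ (see Fig.\ \ref{figX}). Iterating, the first row of 2-cells attached to $d$ on the chosen side is completely determined; inspection of Fig.\ \ref{figX} shows that this first row is isometric to a parallelogram strip of perpendicular width $\sqrt{3}/2$ in each of the three cases: arcs of type $2$-$3$ or $3$-$2$ give a row of two aligned lozenges (by Lemma \ref{loz}), arcs of type $1$-$2$-$1$ give a row of two triangles flanking a lozenge, and arcs of type $1$-$3$ or $2$-$1$-$2$ in Case II give analogous configurations. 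In each case the strip has height precisely $\sqrt{3}/2$, which is what we need.

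Finally, to see that the pairwise intersections of the three strips reduce to $d$, I would note that any two distinct paths of length $\pi$ in $L$ from $x$ to $y$ share only their endpoints (this is immediate from the explicit lists in Lemma \ref{Lpi}). Hence at each vertex $A$, the corresponding first rows of faces meet only along edges of $d$, which implies that the two strips themselves meet only in $d$. The main obstacle is the extension/uniqueness step: one must rule out the possibility that a local $\pi$-arc at $A$ fails to glue coherently to any $\pi$-arc at $uA$, and conversely that strips of height exactly $\sqrt{3}/2$ do not admit a "fourth" realization coming from some exotic gluing not visible in the link at a single vertex. Both of these are excluded by a direct inspection of Fig.\ \ref{figX} combined with the $u$-equivariance of the labelling on $d$.
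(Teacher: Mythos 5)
Your overall strategy is the paper's: use Lemma \ref{Lpi} to see that at each vertex of $d$ there are exactly three flat continuations across that vertex, and then propagate along $d$. But two points need fixing, one of them a genuine gap.

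First, the phrase ``three candidate strips on each side of $d$'' misreads the geometry. By Lemma \ref{loz} every edge of $d$ is incident to exactly three faces (one triangle and two lozenges); a singular geodesic in this $2$-complex does not have two sides, and the three paths of length $\pi$ in $L_A$ between the two directions of $d$ account for \emph{all} flat continuations at $A$, not three per side. Taken literally your count would give six strips, contradicting the statement. What actually happens (and what the paper's proof records) is that the three $\pi$-paths at each vertex $A$ have pairwise distinct first and last edges, hence induce a \emph{bijection} between the three faces on $e$ and the three faces on $f$ for consecutive edges $e,f$ of $d$; iterating this matching edge by edge along $d$ produces exactly three strips whose first rows partition the faces adjacent to $d$, which also gives the disjointness for free.

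Second, and more seriously, your extension step assumes $d=u^{\ZI}A$ for $u\in\{a,c\}$ and uses the translation by $u$ to transport the choice of $\pi$-arc from one vertex to the next. The lemma is stated for an \emph{arbitrary} singular geodesic of $X$, and these are not all of the form $u^{\ZI}A$: the geodesics $a^{\ZI}A$ and $c^{\ZI}A$ are only examples, all of type I, whereas Lemma \ref{Lpi} (Case II) and the subsequent text show that type II singular geodesics also occur and your equivariance argument says nothing about them. No group action is needed: once you know that the three $\pi$-paths at a vertex match the faces on $e$ bijectively with the faces on $f$, the iteration along $d$ is purely combinatorial, which is exactly how the paper concludes. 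With the ``sides'' language removed and the translation argument replaced by this local matching, your proof becomes the paper's.
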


\begin{proof}
For each edge $e$ of $d$ consider the three faces $T_e^i$, $i=1,2, 3$, whose boundary contains $e$. Two of these faces are lozenges and one of them, say $T_e^1$, is a triangle of $X$ (see Lemma \ref{loz}). Let $f$ be an edge of $d$ adjacent to $e$ and let $A$ be their intersection point. The points corresponding to $e$ and $f$ in the link $L_A$ of $X$ at $A$ are trivalent, and it can be easily checked that they are at distance $\pi$ from each other in $L_A$. Thus Lemma \ref{Lpi} applies. In case I, the faces $T_e^1$ and $T_f^1$ correspond to a path of length $\pi$ of the form 1-2-1, and, up to permutation of indices $i$, the faces $T_e^i$ and $T_f^i$ (for $i=2,3$) corresponds to a cycle of length $\pi$ of the form 2-3 and 3-2. In case 2 and again up to permutation of the indices $i$, the faces $T_e^1$ and $T_f^2$ correspond to a path of length $\pi$ of the form 1-3, the faces $T_f^1$ and $T_e^2$ correspond to a path of length $\pi$ of the form 3-1, while the faces $T_e^3$ and $T_f^3$ correspond to a path of length $\pi$ of the form 2-1-2. Then the lemma follows by iterating this on both sides of the geodesic $d$ starting from a fixed edges $e$. The height of each strip may be taken to be at least $\sqrt 3/2$.
\end{proof}

We say that a vertex of a singular geodesic of type I (resp. of type II) depending on whether case 1 (resp. case 2) applied in the proof of the above lemma, and call a geodesic  of type I (resp. of type II) if all its vertices are of type I (resp. type II). For instance the geodesic $a^\ZI A$ and $c^\ZI A$ are of type I for any vertex $A$ of $X$.

\begin{lemma}\label{strip} Let $d$ be a singular geodesic of type I in $X$. There are precisely three flats strips in $X$ of minimal height whose pairwise intersection are reduced to $d$ and whose boundary geodesics are singular geodesics type I in  $X$. Two of them have height $\sqrt 3/2$, and are reunions of aligned lozenges (see Lemma \ref{loz}), and the other one has height $\sqrt 3$, and is a reunion of hexagons as described in Lemma \ref{hexa}, and triangles of $X$ which are the unique triangles completing these hexagons to a flat strip. 
\end{lemma}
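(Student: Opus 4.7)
The plan is to combine Lemma \ref{smallstrip}, which provides three flat strips $S_1, S_2, S_3$ of height at least $\sqrt 3/2$ through $d$ with pairwise intersection reduced to $d$, with Lemma \ref{Lpi}, which says that since $d$ is of type I, at every vertex of $d$ the three paths of length $\pi$ in the link joining the two $d$-directions carry (in some order) the labels $2$-$3$, $3$-$2$ and $1$-$2$-$1$. The three strips then correspond, vertex by vertex, to these three paths, and I will identify each of them geometrically.

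For the $2$-$3$ strip (and symmetrically for $3$-$2$), I would proceed as follows. At every vertex $A$ of $d$, on the strip side, two lozenges meet at $A$, one with angle $\pi/3$ (label $2$) and one with angle $2\pi/3$ (label $3$), sharing a boundary edge $e$ emanating from $A$. Applying Lemma \ref{loz} to $e$ identifies this pair as aligned, with union isometric to a $1\times 2$ parallelogram having $d$ along one of its long sides at perpendicular distance $\sqrt 3/2$ from the opposite side. Propagating along $d$ --- noting that the large-angle lozenge at $A_{k+1}$ coincides with the small-angle lozenge at $A_{k}$ for consecutive vertices $A_k, A_{k+1}$ of $d$ --- assembles a flat strip of height exactly $\sqrt 3/2$ made entirely of aligned lozenges.

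For the $1$-$2$-$1$ strip, I would argue as follows. At every vertex $A$ of $d$ the strip side presents triangle-lozenge-triangle with the middle lozenge having angle $\pi/3$ at $A$; its long diagonal has length $\sqrt 3$ and reaches a vertex at perpendicular distance $\sqrt 3$ from $d$. At consecutive vertices $A_k, A_{k+1}$ of $d$ the right-triangle at $A_k$ coincides with the left-triangle at $A_{k+1}$ as a single shared bottom triangle of $X$ above $[A_k, A_{k+1}]$, while the two lozenges at $A_k, A_{k+1}$ meet at a single vertex $M_k$ at mid-height, each presenting angle $2\pi/3$ at $M_k$. Lemma \ref{hexa} applied at $M_k$ then supplies a flat hexagon whose two $2\pi/3$-lozenges are these two, and whose two completing triangles are the shared bottom triangle and a uniquely determined top triangle of $X$. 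Gluing these hexagons along $d$ --- they overlap in one lozenge at each step --- produces a flat strip of height $\sqrt 3$.

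In each of the three cases, the opposite boundary of the constructed strip is a line through the upper vertices of the lozenges, and reading the link pattern from the strip side at each of its vertices reproduces (by the symmetric form of each pattern) one of the labels $2$-$3$, $3$-$2$ or $1$-$2$-$1$; the homogeneous link structure of $X$ (Fig.~\ref{figX}) on the opposite side of the boundary then supplies the two remaining path-types, certifying that the boundary is itself a singular geodesic of type I. The main obstacle I anticipate is precisely this last certification: a careful local reading of the link $L$ at each candidate boundary vertex is required to rule out the type II patterns $1$-$3$, $3$-$1$, $2$-$1$-$2$ on the opposite side, and to conclude that the heights $\sqrt 3/2$, $\sqrt 3/2$, $\sqrt 3$ are indeed the minimal ones admitted by flat strips through $d$ with boundaries of type I.
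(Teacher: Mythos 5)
Your proposal is correct and follows essentially the same route as the paper's proof: start from the three strips of Lemma \ref{smallstrip}, identify them with the link-paths 2-3, 3-2 and 1-2-1 of Lemma \ref{Lpi}, realize the first two as unions of aligned lozenges of height $\sqrt 3/2$ and the third as a union of hexagons of height $\sqrt 3$ obtained via Lemma \ref{hexa} from the lozenges at consecutive vertices of $d$, and then certify the outer boundaries. The final certification that you flag as the main obstacle is in fact immediate from the dichotomy in Lemma \ref{Lpi}: exhibiting a single path labelled 2-3, 3-2 or 1-2-1 at a boundary vertex already excludes type II there, which is exactly how the paper concludes.
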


\begin{proof}
Let $d$ be a singular geodesic of type I in $X$ and let $S_1$, $S_2$ and $S_3$ be the strips of height $\sqrt 3/2$ given by Lemma \ref{smallstrip}, whose pairwise intersections are reduced to $d$. We may assume at each vertex $A$ of $d$ the path of length $\pi$ in $L_A$ corresponding to  $S_1$  are of the form 1-2-1. Then the path corresponding to $S_2$ and $S_3$ are either of the form 2-3 or 3-2.  

\begin{figure}[htbp]
\centerline{\includegraphics[width=13cm]{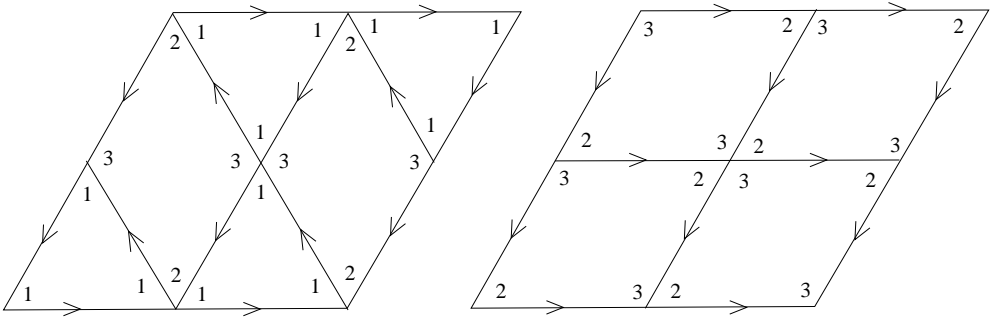}}
\caption{Parallelograms and strips on type I geodesics of $X$}\label{figpar}
\end{figure}

Let us first consider the strip $S_1$ and for each vertex $A$ of $d$ denote by $R_A$ the lozenge of $X$ corresponding to the index 2 in the path 1-2-1 of $L_A$. It is easily seen that if $A$ and $B$ are consecutive vertices on $d$, then the lozenges $R_A$ and $R_B$ are in the configuration described in Lemma \ref{hexa} and they can be completed by a unique triangle of $X$ (besides the one in $S_1$) to form an hexagon $H_{AB}$. The reunion $S_1'$ of all hexagon $H_{AB}$ when $A$ and $B$ runs over the pair of adjacent vertices on $d$ is a flat strip of $X$ of height $\sqrt 3$. Furthermore it is a simple matter to check (with Lemma \ref{Lpi}) that all the vertices of the boundary of this strip which is distinct from $d$ are of type I, which proves the assertion of the Lemma in that case. A parallelogram of the strip $S_1'$ is represented on Fig. \ref{figpar} on the left.

Consider now the strip $S_2$, which is of height $\sqrt 3/2$. The boundary of this strip which is distinct from $d$ contains only vertices whose link intersect $S_2$ along a path of the form 2-3 or 3-2. By Lemma \ref{Lpi} again, these vertices are of type I.
 The case of $S_3$ being identical to that of $S_2$, this proves the lemma. Parallelograms of the strips $S_2$ and $S_3$ are represented on Fig. \ref{figpar}.
\end{proof}

\begin{theorem}\label{mesoGth}
The complex $X$ is of exponential mesoscopic rank at every vertex. More precisely let $O$ be a vertex of $X$ and $k$ be a sufficiently large integer (in fact $k\geq 32$ is sufficient for our purpose). Then the mesoscopic profile $\varphi_O$ at $X$ satisfies
\[
\varphi_O\geq \left ({3\over 2}\right)^{2\mu_k-4}
\]
on the interval $[k-1, k]$ of $\RI_+$, where
\[
\mu_k= \left \lceil k({2\over \sqrt3 }-1) + ({2\over \sqrt 3}-3)\right \rceil.
\]
In particular the group $G$ is of exponential mesoscopic rank.
\end{theorem}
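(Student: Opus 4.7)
The plan is to follow the general strategy employed for the frieze group $\G_{\bowtie}$ in \cite[Sections 6.1--6.2]{rd}: construct an explicit exponential family of flat balls at $O$ by iteratively attaching flat strips along singular geodesics, then show that generic such balls are not contained in any flat of $X$. Compared to the situation in \cite{rd}, the additional difficulty is that the link $L$ embeds in the incidence graph of the Fano plane, and that type I singular geodesics carry \emph{three} available strips of two distinct minimal heights rather than two.

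First, I would choose two transversal type I singular geodesics through $O$, for instance $a^\ZI O$ together with a non-collinear companion, and, using Lemma \ref{hexa} together with the explicit link picture of Fig. \ref{figX}, build a small flat hexagonal ``core'' neighborhood of $O$. This provides a coordinate frame together with an initial finite family of type I singular geodesics along which the construction will iterate outward.

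Second, I would inductively grow this core into a flat ball of radius approximately $k$ by attaching flat strips along the type I singular geodesics lying on the current boundary. The key ingredient is Lemma \ref{strip}: every type I singular geodesic is contained in exactly three minimal flat strips, two of height $\sqrt 3/2$ and one of height $\sqrt 3$; the patterns \mbox{1-2-1}, \mbox{2-3} and \mbox{3-2} of Lemma \ref{Lpi} guarantee that each of the three choices is compatible with the already-built flat region. Counting the radial depth on each axis produces approximately $\mu_k$ independent strip layers in each of two essentially perpendicular directions, the formula for $\mu_k$ reflecting the ratio $\sqrt 3 / (\sqrt 3/2) = 2$ weighted against the available radius $k$. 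Among the three strip choices at each singular geodesic, only two are \emph{a priori} compatible with absorption into any given candidate extending flat, which yields the effective multiplicative factor $3/2$ per choice. Multiplying over the $2\mu_k - 4$ effectively available choices (with $-4$ absorbing boundary effects near $O$ and near the sphere of radius $k$) gives the announced lower bound $(3/2)^{2\mu_k-4}$ on the number of flat balls centered at $O$ with radius in $[k-1,k]$.

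The main obstacle, and the last step, is to verify that the constructed balls are generically \emph{not} contained in any flat of $X$. If such a ball $B$ were contained in a flat $F\cong\RI^2$, then $F$ would prescribe a single one of the three available strips along every type I singular geodesic it meets; hence any two incompatible strip choices made during the construction rule out the existence of a common extending flat. One shows, using the local link combinatorics of Fig. \ref{figX}, that these incompatibilities propagate, so that generic branching patterns cannot be accommodated by any single flat. Since only polynomially many of the constructed balls can be absorbed into a flat through $O$, the exponential bound survives, and $X$ is of exponential mesoscopic rank at $O$. Exponential mesoscopic rank for $G=B_4/\langle z\rangle$ then follows immediately from the proper cocompact action of $G$ on $X$.
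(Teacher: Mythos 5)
Your proposal has the right flavour (branching of flat strips along type I singular geodesics, giving an exponential count of flat disks), but it diverges from what actually works and leaves the decisive step unproved. First, the construction: you propose to grow a flat ball radially from a hexagonal core at $O$ by attaching strips ``along the type I singular geodesics lying on the current boundary.'' The boundary of a ball is a circle, not a union of geodesics, and a radial growth scheme would force you to control compatibility of strip choices around the entire link at every stage; nothing in Lemmas \ref{loz}--\ref{strip} gives you that. The paper instead starts from a genuine flat $\Pi$ (the one generated by $\langle a,c\rangle\simeq\ZI^2$), keeps only a sector $\Pi_0$ of angle $2\pi/3$ at $O_0$, and centers the disks at the far points $O_k=(ac)^{-k}(O_0)$ on the bisector; the branching then occurs only in two ``wings'' attached along the two boundary geodesics $d_1,d_2$ of the sector. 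This is also where $\mu_k$ comes from: it is the number of layers of height $\sqrt 3/2$ needed to cover the excess of the disk of radius $k$ centered at $O_k$ over the sector $\Pi_0$, not a ratio of strip heights. Likewise the factor $3/2$ per layer is a lower bound for the branching of the transverse tree of strips of prescribed height, not a count of ``$3$ choices of which $2$ are compatible with a flat.''

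The more serious gap is the non-flatness argument. You assert that a flat $F\supset B$ ``would prescribe a single one of the three available strips along every type I singular geodesic,'' that incompatibilities ``propagate,'' and that ``only polynomially many of the constructed balls can be absorbed into a flat.'' None of this is justified, and it is exactly the hard point: in a triangle building every flat ball lies in uncountably many flats, so an exponential supply of flat balls proves nothing by itself, and the link $L$ embeds into the Fano incidence graph, so a building-like behaviour must be explicitly excluded. The paper does this with a concrete local obstruction (Lemma \ref{c13}): near the apex of the sector it exhibits points $A$, $B$, $C$ and lozenges $R$, $R'$ such that any flat disk of radius $>k+1$ containing one of the constructed disks would produce a cycle of length $2\pi$ in the link $L_C$ with two consecutive edges of length $2\pi/3$, which is impossible in $L$. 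This obstruction applies to \emph{all} the constructed disks simultaneously, so no generic/polynomial-exceptions argument is needed. Without such an explicit obstruction your proof does not go through.
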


\begin{proof}
Let $\Pi$ be the flat containing the origin $O=O_0$ of $X$ and generated by the subgroup $\langle a,c\rangle\simeq \ZI^2$ of $G$. Denote  $O_1=(ac)^{-1}(O_0)$ let $d=[O_0,\infty)$ be the semi-line of $\Pi$ of origin $O_0$ and containing $O_1$. Hence the vertices of $d$ are the points 
\[
O_k=(ac)^{-k}(O_0)
\]
for $k\in \NI$. Let $\Pi_0$ be the sector of $X$ of extremity $O_0$, of angle $2\pi/3$ at $O_0$, and which is bisected by the semi-line $d$ (see Figure \ref{mesoG}). The boundary of $\Pi_0$ is included in the reunion of singular geodesics $d_1$ and $d_2$ which intersects at $O_0$; the first one  contains the vertices $a^{-k}(O_0)$ and the second one the vertices $c^{-k}(O_0)$, $k\in \NI$. Both $d_1$ and $d_2$ are of type I.

Consider the vertices  $A=a(O_0)$ of the flat $\Pi$.
By Lemma \ref{strip}:
\begin{enumerate}
\item There is a unique strip $S_1$ of height $\sqrt 3$ whose intersection with $\Pi$ is reduced to $a^{\ZI}(O_0)$, and whose other boundary $d_1'$ is a singular geodesic of type I in $X$. We consider then on $d_1'$ the unique strip of height 1, say $S_1'$, which corresponds in the link of vertices of $d_1'$ to paths of the form 3-2 (see Fig. \ref{mesoG}). Let $d_1''$ be the other boundary of $S_1'$.
\item Consider the  strip $S_2$ in $X$ of height $\sqrt 3/2$  on $c^{\ZI}(O_0)$ which contains $A$. (This strip is included in $\Pi$ and its other boundary  $d_2'=c^{\ZI}(A)$ is a singular geodesic of type I in $X$.) There is on  $d_2'$ a unique strip $S_2'$ of height $\sqrt 3$ whose other boundary $d_2''$ is a singular geodesic of type I in $X$.
\end{enumerate}

The strips $S_1$, $S_1'$, $S_2$ and $S_2'$ are represented on Fig. \ref{mesoG}, together with the labellings given by the links at their vertices.

\begin{figure}[htbp]
\centerline{\includegraphics[width=14cm]{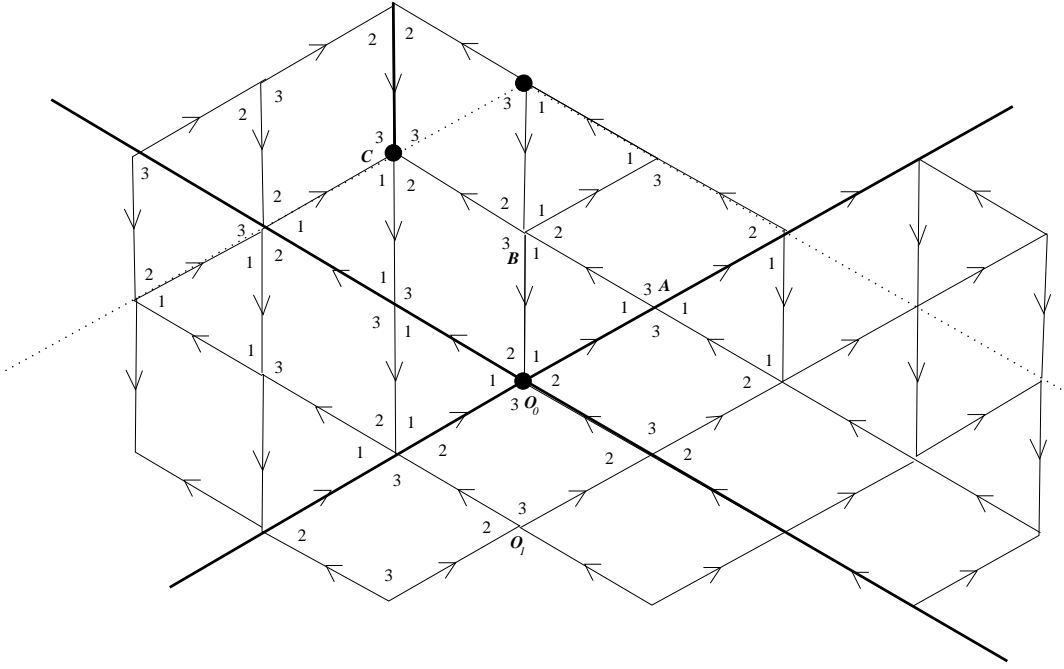}}
\caption{Exponential mesoscopic rank of the complex $X$}\label{mesoG}
\end{figure}

\begin{lemma}\label{c13}
Let $k\in \NI$ and let $D$ be a flat disk in $X$ of center $O_k$ such that $D\backslash (X\backslash \Pi_0) =D\cap \Pi_0$. If the intersections $D\cap S_i$ and $D\cap S_i'$, $i=1,2$ are non empty open sets, then $D$ is not included in a flat of $X$. 
\end{lemma}

\begin{proof}[Proof of Lemma \ref{c13}]
As we see on the link $L_A$ of $A$, there is a unique lozenge $R$, which corresponds to a label 2 in $L_A$ and which  extends the strips $S_1$ and $S_2'$ at the point $A$ to a flat disk  in $X$ containing $A$ as an interior point. This lozenge contains a vertex $B$ at distance $\pi$ from $c^{-1}(A)$ in $L_A$ and in turn, there is a unique way to extend  the resulting configuration to a flat disk in $X$ containing $B$ as an interior point. This disk corresponds to a circle of length $2\pi$ which is labelled 1-3-2-1-2 in $L_B$.  Let $R'$ be the lozenge of $X$ distinct from $R$ which corresponds the label 2 in this circle.

It is easy to see that, if $D$ is a flat disk as in the statement of the lemma, then any flat disk $D'$ of center $O_k$ and radius $> k+1$ which contains $D$ must contain the points $A$ and $B$ as interior points and must intersect the lozenge $R'$ along a non-empty open subset. On the other hand $D$, and a fortiori $D'$, intersects the strip $S_1'$ along a non empty open set. Thus $D'$ intersects along an non empty open set the lozenge of $S_1'$ which contains $C$ and whose internal angle at $C$ is $2\pi/3$.  But this shows that $D'$ cannot be extended beyond the point $C$, since this would give a cycle of length $2\pi$ in the link $L_C$ containing two successive edges of length $2\pi/3$. Thus neither $D'$ nor $D$ is included in a flat of $X$.  
\end{proof}

We can now conclude the proof of Theorem \ref{mesoGth}. We proceed as in Lemma 59 of \cite{rd}, to which we refer for more details. For $k\geq 32$ let $\mu_k$ be the integer defined in the statement of the theorem and let $\nu_k=\left (3\over 2\right )^{\mu_k}$. (Since $k\geq 32$ we have $\mu_k\geq 3$.)
Using Lemma \ref{strip}, we can construct, for $i=1,2$, (at least) $\nu_k$  distinct flat strips 
\[
\S_i^1,\ldots, S_i^{\nu_k}
\]
in $X$ of height ${\sqrt 3\over 2}\mu_k$, each of whose intersection with $S_i'$ is reduced to $d_i''$.
 (The lower bound $\nu_k$  is estimated by examining transverse trees in the sets $\cup_{j=1}^{\nu_k} S_i^j$;  sharper bounds can  be obtained  easily but $\nu_k$ is enough to show exponential growth of the mesoscopic profile.)   So let $i=(i_1,i_2)\in \{1,\ldots,\nu_k\}^2$ and consider the subset $\Pi_i$ of $X$ defined by \[
 \Pi_i=\Pi_0\cup S_1^{i_1}\cup S_2^{i_2}.
 \]
Then  the set $D_i$ of points of $\Pi_i$ at distance  $\leq k+1$ from $O_k$ in $\Pi_i$ is a flat disk in $X$ whose boundary contains $B$. Furthermore   the disks $D_i$ are pairwise distinct when $i$ varies in  $\{1,\ldots,\nu_k\}^2$. 
For $r\in [0,k+1]$ write $D_i^r$ for the concentric disk of radius $r$ in $D_i$.  Then for any fixed $r\in [k,k+1]$ the family of disks 
\[
\{D_1^r, \ldots D_{\nu_k}^r\}
\]
contains at least $\left ({3\over 2}\right) ^{2\mu_k -4}$
distinct elements. Furthermore all these disks satisfy the assumption of Lemma \ref{c13} and thus are not included in a flat of $X$. Since the vertex $O_k$ are all equivalent under the group $G$, this proves the theorem.
\end{proof}

\subsection{Exponential mesoscopic rank for $Y$ and the braid group $B_4$.}\label{63} We conclude this section with the proof of Theorem \ref{meso}.

\begin{theorem}
Let $O$ be a vertex of $Y$ and consider the CAT(0) projection $\pi : Y\to X$ associated to the metric decomposition $Y\simeq X\times \RI$.  Then the mesoscopic profile $\varphi_O^Y$ at $Y$ satisfies
\[
\varphi_O^Y\geq \varphi_{\pi(O)}^X
\]
where $\varphi_{O'}^X$ is the mesoscopic profile of $X$ at a vertex $O'\in X$. In particular the braid group $B_4$ is of exponential mesoscopic rank.
\end{theorem}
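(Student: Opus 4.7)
The plan is to pull back the exponential mesoscopic rank of $X$ from Theorem \ref{mesoGth} along the projection $\pi$. Let $O' = \pi(O)$. To every flat $2$-disk $D \subset X$ of radius $r$ centered at $O'$ which is not contained in a $2$-flat of $X$, I associate the closed subset
\[
B(D) = B_r(O;Y) \cap (D \times \RI) \subset Y.
\]
Because the metric on $Y$ is the $\ell^2$-product and $D$ has radius exactly $r$, the set $B(D)$ is isometric to the closed Euclidean $3$-ball of radius $r$ (for any $(x,t) \in D \times \RI$ with $d_Y((x,t),O) \leq r$ one automatically has $d_X(O',x) \leq r$, so $x \in D$). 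Thus $B(D)$ is a flat ball of radius $r$ centered at $O$, and the map $D \mapsto B(D)$ is injective because $\pi(B(D)) = D$.

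The main obstacle is to check that $B(D)$ is itself not contained in any $3$-flat of $Y$. For this I would first establish the structural lemma: \emph{every $3$-flat $F \subset Y = X \times \RI$ has the form $F_X \times \RI$ for some $2$-flat $F_X \subset X$.} The projection $\pi_\RI : Y \to \RI$ is affine along geodesics in the product, so its restriction $h = \pi_\RI|_F$ is an affine function on the Euclidean $3$-space $F$. It cannot be constant, for then $F$ would lie in a copy of $X$ and exceed its dimension. Hence the level sets $F_t = h^{-1}(t)$ are parallel $2$-dimensional affine subspaces of $F$, each projecting isometrically onto a $2$-flat $F'_t \subset X$. Pick a unit vector $v \in F$ orthogonal to $F_0$ and decompose its image in $X \times \RI$ as $v = (v_X, v_\RI)$ with $|v_X|^2 + v_\RI^2 = 1$; orthogonality of $v$ to $F_0$ inside $F$ translates into orthogonality of $v_X$ to the tangent plane of $F'_0$ in $X$. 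At an interior point of a $2$-cell of $X$ contained in $F'_0$, the tangent space to $X$ is $2$-dimensional and already spanned by $F'_0$, which forces $v_X = 0$ and $|v_\RI| = 1$. The vertical direction is thus a parallel direction in $F$, so $F = F'_0 \times \RI$, proving the lemma. The crucial input here is $\dim X = 2$: a $2$-flat in $X$ leaves no room for a direction in $X$ orthogonal to it, ruling out ``tilted'' $3$-flats in $Y$.

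Granted the lemma, if $B(D)$ were contained in a $3$-flat $F = F_X \times \RI$ of $Y$, applying $\pi$ would give $D = \pi(B(D)) \subset F_X$, placing $D$ inside a $2$-flat of $X$ and contradicting the hypothesis. Consequently $D \mapsto B(D)$ is an injection from the family counted by $\varphi^X_{O'}(r)$ into the family counted by $\varphi^Y_O(r)$, yielding $\varphi^Y_O \geq \varphi^X_{O'}$ on all of $\RI_+$. Combined with Theorem \ref{mesoGth} applied at $O' = \pi(O)$ and the free cocompact action of $B_4$ on $Y$, this completes the proof that $B_4$ has exponential mesoscopic rank.
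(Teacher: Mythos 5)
Your proposal is correct and follows essentially the same route as the paper: both build the flat $3$-balls over the disks $D$ via the product decomposition $Y\simeq X\times \RI$ and rule out containing $3$-flats by projecting them back to $2$-flats of $X$. The only difference is that you prove in detail the splitting $F=F_X\times\RI$ of $3$-flats (using $\dim X=2$), a point the paper merely asserts by stating that $\pi(\Pi)$ is a convex subset of $X$ isometric to $\RI^2$; your lemma is a clean justification of that assertion.
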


\begin{proof}
For $r\in \RI$, let $k=\varphi_X(r)$ and consider $k$ distinct flat disks $D_1,\ldots, D_k$ of center $O'=\pi(O)$ and radius $r$ in $X$ which are not included in a flat of $X$. Let 
\[
C_i=\pi^{-1}(D_i)\simeq D_i\times \RI
\]
be the cylinder of $Y$ corresponding to the decomposition $Y\simeq X\times \RI$. These cylinder are isometric to cylinders in the Euclidean space $\RI^3$ and in particular the ball of center $O$ and radius $r$ in $C_k$ are all flat balls of $X$. Furthermore, these balls are not included in flats of $Y$. Indeed, if $B_i\subset \Pi$ where $\Pi\simeq \RI^3$ is isometric to the Euclidean space $\RI^3$, then the projection $\pi(\Pi)$ is a convex subset of $X$ which is isometric to the Euclidean space $\RI^2$. But this shows that $D_i$ is included in a flat of $X$. Finally, as the ball $B_i$ are pairwise distinct (since the disks $D_i$ are), we obtain that $\varphi_Y(r)\geq k$ as claimed. The last assertion follows from Theorem \ref{mesoGth}.
\end{proof}

\begin{remark}
It would be interesting to give example of groups which act properly with compact quotient on a CAT(0) space of dimension $\geq 3$ of exponential mesoscopic rank, which doesn't split as a metric product where some factor is of exponential mesoscopic rank. 
 In view of Tits' classification of affine buildings, it seems plausible that CAT(0) simplicial complexes ``whose rank is close to their dimension"   will get sparse when the dimension gets strictly greater than 2. Recall here that affine Bruhat-Tits buildings are completely classified in dimension $\geq 3$ by work of Tits \cite{Tits74}, and that this is far from being possible in dimension 2 which offers a great degree of freedom \cite{henri}.
 We also refer to the paper of Ballmann and Brin \cite{BB} concerning rank rigidity results in dimension 3.
\end{remark}

\section{More on mesoscopic rank}\label{more}

In the present section we investigate possible relations between the Brady complex $X$ and triangle buildings of order 2. Furthermore we present a group   which acts freely isometrically with compact quotient on polyhedra of exponential mesoscopic rank that is embeddable into a triangle building.

Let us observe first that 
the link of $X$ (represented Fig. \ref{fig1}) obviously embeds (simplicially) into the incidence graph $L_2$ of the Fano plane. Such an embedding is made explicit on Fig. \ref{fano} below; the graph $L$ is obtained from $L_2$ by removing a tree $T$ of 5 edges (indicated in dots on Fig. \ref{fano}). We call \emph{center-edge} of $T$ the only non-extremal edge of $T$---this tree $T$ appears often in \cite{poisson} where  it is called a fishy edge (or a fish bone, depending on the translation).

\begin{figure}[htbp]
\centerline{\includegraphics[width=5.5cm]{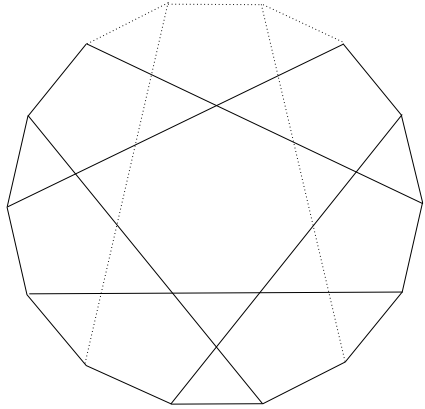}}
\caption{The incidence graph $L_2$ of the Fano plane and the link of $X$}\label{fano}
\end{figure}

The graph $L_2$ is a spherical building and can be identified to the link of triangle buildings  of order 2 (see e.g. \cite{Ronan}; triangle buildings are also called affine Bruhat-Tits buildings of type $\tilde A_2$). By \cite{henri}  there are uncountably many such buildings, and their  groups  of automorphisms is generically trivial (generic is taken here in the sense of Baire with respect to some appropriate topology).  In view of the above embedding $L\inj L_2$, it is natural to ask whether the complex $X$ itself can be simplicially embedded into one of these triangle buildings. It turns out that this problem has an elementary answer.

\begin{proposition}\label{prop20}
Let $X$ be the brady complex and $\Delta$ be a triangle building of order 2. There is no simplicial embedding $X\inj \Delta$. More generally, any CAT(0) complex $X$ of dimension 2 whose faces are equilateral triangle and whose links at each vertex are isomorphic to $L$ does not embed simplicially into a triangle building. 
\end{proposition}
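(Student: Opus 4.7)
Suppose for contradiction that $\phi:X \inj \Delta$ is a simplicial embedding, where $\Delta$ is a triangle building of order $2$. The plan is to track a canonical combinatorial chain of valence-2 edges in $X$ all of whose endpoints are forced to lie in the link of a single vertex of $\Delta$, and then to contradict the finiteness of this link.

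\emph{Local fish-bone structure.} At each vertex $v$ of $X$, the map $\phi$ induces an injective simplicial map $L_v \inj L_{\phi(v)} \cong L_2$. Since by hypothesis $L_v \cong L$, and since (as recalled before the statement) $L$ is $L_2$ minus a fish-bone tree of 5 edges, the complement of the image of $L_v$ in $L_2$ is again a fish-bone tree, which I denote $T_v$. The two spine vertices of $T_v$ correspond to two edges of $\Delta$ incident to $\phi(v)$ which do not belong to $\phi(X)$ (the \emph{ghost edges} at $v$), and the four leaves of $T_v$ correspond to the four valence-2 edges of $X$ at $v$, i.e.\ to the internal edges of the four lozenges of which $v$ is an obtuse vertex. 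The tree $T_v$ thus partitions these four valence-2 edges into two pairs: each pair consists of the two leaves attached to a common spine vertex of $T_v$.

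\emph{Intrinsic character of the pairing.} A direct inspection of $L$ using the Fano-plane coordinates of $L_2$ shows that, among the six pairs of valence-2 vertices of $L$, exactly two have graph-distance $4$ in $L$ while the four remaining pairs have graph-distance $3$. Consequently, among the three partitions of the four valence-2 vertices of $L$ into two pairs, exactly one consists of two distance-$4$ pairs; this is precisely the partition induced by $T_v$. It follows that the pairing of the four lozenges at $v$ prescribed by $T_v$ depends only on $L_v$ and not on $\phi$ —it is intrinsic to $X$.

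\emph{Propagation of the ghost vertex.} Fix a valence-2 edge $e_0 = vw$ of $X$ and let $Z \in \Delta$ be the third vertex of the unique ghost triangle at $e_0$. At $v$, the intrinsic pairing partners $e_0$ with another valence-2 edge $e_{-1}$ incident to $v$; as both ghost triangles at $e_0$ and $e_{-1}$ use the same ghost edge $vZ$ at $v$, the ghost triangle at $e_{-1}$ again has third vertex $Z$. Applying the same argument at $w$ yields a valence-2 edge $e_1$ incident to $w$ whose ghost triangle has third vertex $Z$. Iterating this procedure at each new endpoint produces a sequence $(e_k)$ of valence-2 edges of $X$—each sharing a vertex with its neighbours in the sequence—such that the ghost triangle at every $e_k$ has third vertex $Z$. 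In particular, every endpoint of every $e_k$ is a neighbour of $Z$ in $\Delta$.

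\emph{Closing the argument.} The link of $Z$ in $\Delta$ is isomorphic to $L_2$, which has only $14$ vertices, so $Z$ has at most $14$ neighbours in $\Delta$. The chain $(e_k)$ can therefore visit at most $14$ distinct vertices of $X$, hence must close up into a combinatorial cycle of length at most $14$. The hard part is to exclude this, i.e.\ to prove that in any CAT(0) complex $X$ with links isomorphic to $L$, the chain of fish-bone-paired valence-2 edges cannot close within $14$ steps. I expect this to follow from a flatness-and-extension argument using the geometric distinction in $L$ between the distance-$4$ pairing used above and the distance-$3$ pairing arising from the hexagonal flats of Lemma~\ref{hexa}, together with the flat strip structure on singular geodesics produced in Lemma~\ref{strip}: these should force the chain to trace out an unbounded substructure in $X$, giving the required contradiction.
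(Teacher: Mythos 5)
Your setup (the removed fish-bone tree $T_v$, its two spine vertices as ghost edges, its four leaves as the valence-$2$ edges at $v$, the intrinsic pairing, and the propagation of the ghost vertex $Z$) is sound, and it is a genuinely different route from the paper's, which never leaves the ball of radius $2$: the paper works with the \emph{center-edge} of $T_A$, i.e.\ the ghost triangle $(ABC)$ joining the two ghost edges at $A$, passes to the two remaining triangles $(ABB')$, $(ABB'')$ of $\Delta$ on the ghost edge $[A,B]$, shows the corresponding edges must be extremal in $T_{B'}$ and $T_{B''}$ (else $T_A$ would exceed $5$ edges), and concludes that all six triangles of $\Delta$ on $[B,B']$ and $[B,B'']$ lie outside $X$, so that $T_B$ would need at least six edges --- a contradiction obtained entirely inside the $2$-ball. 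However, your argument as written has a genuine gap, and you say so yourself: everything hinges on showing that the chain $(e_k)$ cannot close up after visiting at most $14$ vertices, and for this you offer only the hope that Lemmas~\ref{hexa} and~\ref{strip} will ``force the chain to trace out an unbounded substructure.'' As it stands, no contradiction has been derived.

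The missing step can in fact be closed cheaply from a fact you already isolated. Two leaves of $T_v$ hanging on the same spine vertex lie in the same bipartition class of $L_2$, and their unique common neighbour in $L_2$ (the spine vertex, unique because $L_2$ has girth $6$) is deleted in $L_v$; hence their distance in $L_v$ is at least $4$, i.e.\ at least $4\pi/3>\pi$ in the angular metric. So consecutive edges $e_k$, $e_{k+1}$ of your chain meet at angle $\geq\pi$ at their common vertex, the concatenation of the $e_k$ is a local geodesic, hence a genuine bi-infinite geodesic of the CAT(0) space $X$, and it therefore passes through infinitely many distinct vertices --- all of which you have shown to be mapped to neighbours of the single vertex $Z$ of $\Delta$, of which there are only $14$. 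With that paragraph added your proof is complete; note, though, that it uses the CAT(0) hypothesis globally, whereas the paper's argument shows (as the Remark following the proposition records) that already the simplicial $2$-balls of $X$ do not embed into those of $\Delta$.
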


\begin{proof}
Let $X$ be a  CAT(0) complex $X$ of dimension 2 whose faces are equilateral triangle and whose links at each vertex are isomorphic to $L$, and assume that we are given  a simplicial embedding $X\inj \Delta$. For a vertex $D$ of $X$ we write $T_D$ for the removed tree, 
\[
T_D=L_{\Delta,D}\backslash L_{X,D},
\]
where $L_{Z,D}$ denotes the link of $D$ in the complex $Z$. Fix some vertex $A\in X$. Then there is a unique triangle  in $\Delta$, say $(ABC)$, which corresponds to the center-edge of the tree $T_A$ at the point $A$. Denote by $(ABB')$ and $(ABB'')$ the two other triangles in $\Delta$ adjacent to the edge \begin{figure}[htbp]
\centerline{\includegraphics[width=6cm]{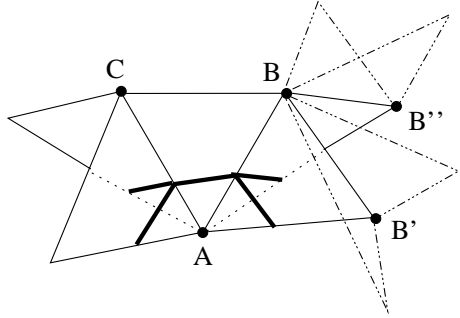}}
\caption{Embedding $X$ into a triangle building}\label{fig6}
\end{figure}
$[A,B]$. In the link $L_{\Delta,B'}$ (resp. $L_{\Delta,B''}$),  the edge corresponding to the triangle $(ABB')$ (resp. $(ABB'')$) is extremal in the tree $T_{B'}$ (otherwise $L_{\Delta,A}\backslash L_{X,A}$ would contain more than 5 edges). Thus the three triangles of $\Delta$ adjacent to the edge $[B,B']$ (resp. $[B,B'']$) do not belong to $X$. But then the graph $L_{\Delta,B}\backslash L_{X,B}$ contains at least six edges, contradicting our assumptions. Therefore  there is no simplicial embedding   $X\inj \Delta$.
\end{proof}

\begin{remark}
The proof of Proposition \ref{prop20} shows more, namely, it shows that the obstruction of an embedding $X\inj \Delta$ is \emph{local}: for $X$ and $\Delta$ as in the proposition, there is no simplicial embedding of simplicial balls of radius 2 in $X$ into simplicial balls of radius 2 in $\Delta$. In other words the embedding $L\inj L_2$ is the best  we can do.
\end{remark}

We will conclude with a discussion of the following question, which is natural in view of the  above. 

\medskip

Does (exponential) mesoscopic rank for a CAT(0) complex prevents  simplicial embeddings of this complex 
into an affine Bruhat-Tits building ? 

\medskip

It turns out that the answer   is negative. It can be shown that the group $G_0$ defined by the presentation: 
\[
G_0=\langle r,s\mid s^{-2}ts^2t=t^2st^{-2}\rangle
\]
admits a free and isometric action  with compact quotient on a CAT(0) simplicial complex $X_0$ of dimension 2 such that:
\begin{enumerate}
\item there is a simplicial embedding $X_0\inj \Delta$ where $\Delta$ is a triangle building of order 2;
\item $X_0$ is of exponential mesoscopic rank.
\end{enumerate}

Since the construction from which $G_0$ is derived is not related  to braid groups and would take us too far away from the subject of the present paper,  we will omit the proofs of the above two statements. 
Let us simply describe the local geometry of $X_0$, which can be interestingly  compared to that of the Brady complex $X$.

\begin{figure}[htbp]
\centerline{\includegraphics[width=5.5cm]{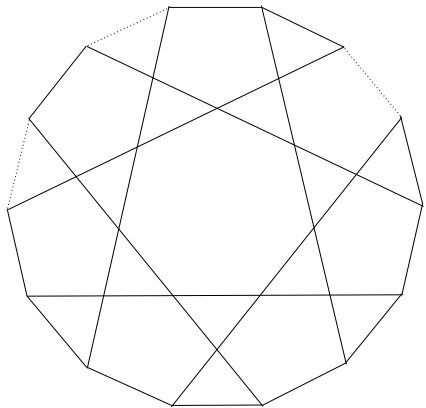}}
\caption{The link of a complex of mesoscopic rank that can be embedded into a triangle building}\label{fanomeso}
\end{figure}

The links at each vertex of $X_0$ (which necessarily embed  simplicially into $L_2$) are all isometric (in fact $G_0$ acts transitively on the vertices of $X_0$).  They are obtain from graph $L_2$ by removing 3 edges. Note that these edges are not in a tree but they are irregularly positioned on the graph $L_2$, as for the link $L$ of the Brady complex $X$. The complex $X_0$ has exponential asymptotic rank in the sense of \cite{rd}.

The above figure represents the links in $X_0$.
The complex $X_0$ itself contains two types of faces, equilateral triangles and parallelograms of size $2\times1$ in the Euclidean plane;  a representation of these faces and their labellings can be found on Fig. \ref{x0}.

\begin{figure}[htbp]
\centerline{\includegraphics[width=11cm]{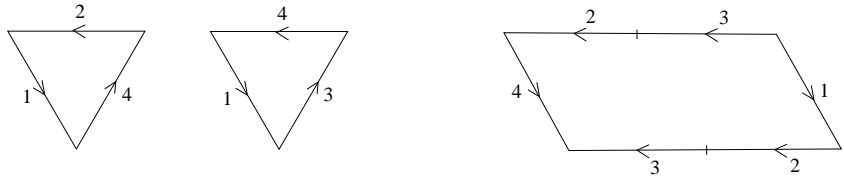}}
\caption{Description of the complex $X_0$}\label{x0}
\end{figure}

\end{document}